\newtheorem{theorem}{Theorem}[section]
\newtheorem{corollary}[theorem]{Corollary}
\newtheorem{lemma}[theorem]{Lemma}
\DeclareMathOperator{\GL}{GL}
\DeclareMathOperator{\SL}{SL}
\DeclareMathOperator{\Aut}{Aut}
\begin{document}
\bibliographystyle{plain}

\title{Quantifying Residual Finiteness}

\author{Khalid Bou-Rabee}
\maketitle

\begin{abstract}
We introduce the notion of quantifying 
the extent to which a finitely generated group is residually finite. We investigate this behavior for examples that include  free groups, the first Grigorchuk group, 
finitely generated nilpotent groups, and certain arithmetic groups such as $\SL_n( \mathbb{Z})$. 
In the context of finite nilpotent quotients, we find a new characterization of finitely generated nilpotent groups.
\end{abstract}

\section*{Introduction}

Given a finitely generated group $G$, one natural question that has attracted interest is the asymptotic growth of the number of subgroups of $G$ of index $n$. Indeed, the subject of subgroup growth predates the study of word growth (see Lubotzky and Segal \cite{lubsegal-2003}, page xvi). 
In this context, the class of residually finite groups is particularly interesting as they have a rich collection of finite 
index subgroups. Recall that such groups have the property that the intersection of all finite index subgroups is trivial. Given this property, one might ask how quickly this intersection becomes trivial or in the same 
vein, how well finite quotients of $G$ approximate $G$. The goal of this article is to make precise and investigate this question for several classes of residually finite groups. Before stating our main results on this, some notation is required. 

For a fixed finite generating set $S$ of $G$ and $g \in G$, let $\| g \|_S$ denote the word length of $g$ with respect to $S$.
Define
$$k_G (g) := \min \{ |Q| \::\: \text{ $Q$ is a finite quotient of $G$ where $g \neq 1$}\},$$
and 
 $$F_G^S(n) := \max\{  k_G (g) \;:\;\| g \|_S \leq n \}.$$
The objective of this paper is to study the asymptotic properties of $F_G^S$ and one of its variants.

Throughout the paper, we write $f_1 \preceq f_2$ to mean that there exists a $C$ such that $f_1(n) \leq C f_2(Cn)$ for all $n$, and we write $f_1 \simeq f_2$ to mean $f_1 \preceq f_2$ and $f_2 \preceq f_1$.
The dependence of $F_G$ on the generating set is mild, a fact that we will see in Section 1.
Consequently, we will suppress the dependence of $F_G$ on $S$ for the remainder of the introduction.
In that same section, we will also provide some general facts on the behavior of $F_G$ under group extensions, passage to subgroups, and taking direct products.

Our first main result establishes the polynomial growth of $F_G$ for certain arithmetic lattices---see Section 2 for the definition of $\mathcal{O}_L$.
\begin{theorem}
\label{lineargrouptheorem}
Let $L$ be a finite extension of $\mathbb{Q}$. If $k \geq 2$, then $F_{\SL_k( \mathcal{O}_L)}(n) \preceq n^{k^2-1}$.
Moreover, if $k > 2$, then $F_{\SL_k( \mathcal{O}_L)}(n) \succeq n$.
 \end{theorem}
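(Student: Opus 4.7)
For the upper bound, let $g \in \SL_k(\mathcal{O}_L)$ satisfy $\|g\|_S \leq n$. Each archimedean absolute value of an entry of $g$ is at most $C^n$ for some $C = C(S)$, so any nonzero entry $\alpha$ of $g - I$ satisfies $|N_{L/\mathbb{Q}}(\alpha)| \leq C_1^n$. The plan is to find a prime ideal $\mathfrak{p}$ of $\mathcal{O}_L$ of small norm with $\alpha \notin \mathfrak{p}$: since each $\mathfrak{p}$ containing $\alpha$ divides the principal ideal $(\alpha)$, one has $\sum_{\mathfrak{p} \ni \alpha} \log N(\mathfrak{p}) \leq \log|N(\alpha)| = O(n)$, while Landau's prime ideal theorem yields $\sum_{N(\mathfrak{p}) \leq M} \log N(\mathfrak{p}) \sim M$. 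Taking $M$ a sufficient constant multiple of $n$ forces some prime $\mathfrak{p}$ of norm $O(n)$ to miss $\alpha$, and then $\SL_k(\mathcal{O}_L/\mathfrak{p})$ is a quotient of size at most $N(\mathfrak{p})^{k^2-1} = O(n^{k^2-1})$ in which $g$ is nontrivial. This establishes $F(n) \preceq n^{k^2-1}$.

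For the lower bound, assume $k \geq 3$ and invoke the congruence subgroup property for $\SL_k(\mathcal{O}_L)$ (Bass--Lazard--Serre, Mennicke, Bass--Milnor--Serre): every finite-index subgroup contains a principal congruence subgroup $\Gamma(\mathfrak{m})$, so $k_G(g) \geq \min\{|\SL_k(\mathcal{O}_L/\mathfrak{m})| : g \notin \Gamma(\mathfrak{m})\}$. Set $N_n := \operatorname{lcm}(1, \ldots, n) \in \mathbb{Z} \subset \mathcal{O}_L$ and $g_n := E_{12}(N_n)$. Every prime ideal $\mathfrak{p}$ of $\mathcal{O}_L$ with $N(\mathfrak{p}) \leq n$ lies over a rational prime $p \leq n$, so $N_n \in \mathfrak{p}$ and $g_n \in \Gamma(\mathfrak{p})$. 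Consequently any $\mathfrak{m}$ distinguishing $g_n$ from the identity has a prime factor of norm exceeding $n$, yielding $k_G(g_n) \succeq n^{k^2-1}$.

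It remains to control $\|g_n\|_S$. Because $k \geq 3$, one can pick a third index $h \notin \{1, 2\}$, and the commutator identity $[E_{ih}(a), E_{hj}(b)] = E_{ij}(ab)$ gives $\|E_{ij}(ab)\|_S \leq 2(\|E_{ih}(a)\|_S + \|E_{hj}(b)\|_S)$. Iterating this recursion yields $\|E_{12}(N)\|_S = O((\log N)^2)$; since $\log N_n = \Theta(n)$ by the prime number theorem, $\|g_n\|_S = O(n^2)$. Putting the pieces together, $F(Cn^2) \geq k_G(g_n) \succeq n^{k^2-1}$, which rearranges to $F(m) \succeq m^{(k^2-1)/2} \succeq m$ for $k \geq 3$.

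The delicate point in the upper bound is using the Chebyshev-type estimate $\sum_{N(\mathfrak{p}) \leq M} \log N(\mathfrak{p}) \sim M$ rather than the weaker $\pi_L(M) \sim M/\log M$; this is what avoids a spurious $\log n$ factor in the exponent. The decisive input for the lower bound is the congruence subgroup theorem, without which one has no handle on non-congruence finite quotients---precisely what breaks the argument in the excluded case $k = 2$. The commutator word-length bound is elementary but could be replaced by Lubotzky--Mozes--Raghunathan's sharper estimate $\|E_{12}(N)\|_S = O(\log N)$ to strengthen the conclusion to $F \succeq n^{k^2-1}$.
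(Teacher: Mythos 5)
Your upper bound is correct and follows essentially the same strategy as the paper: the entries of a word of length $n$ are exponentially bounded, and one then finds a prime of norm $O(n)$ modulo which a nonzero entry of $g-I$ survives, landing $g$ nontrivially in an $\SL_k$ of a residue field of size $O(n)$. The paper routes this through an integral basis, the bound $F_{\mathbb{Z}}(n)\simeq\log n$, and the Cebotar\"ev density theorem to produce a split rational prime; your direct use of $\sum_{\mathfrak{p}\ni\alpha}\log N(\mathfrak{p})\le\log|N_{L/\mathbb{Q}}(\alpha)|$ together with Landau's prime ideal theorem is an equivalent and arguably more economical packaging of the same idea.

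The lower bound has a genuine gap at the step $k_G(g)\ge\min\{|\SL_k(\mathcal{O}_L/\mathfrak{m})| : g\notin\Gamma(\mathfrak{m})\}$. For the minimal finite quotient $G/N$ detecting $g$, the congruence subgroup property gives an inclusion $\Gamma(\mathfrak{m})\le N$ with $g\notin\Gamma(\mathfrak{m})$, hence $|G/N|\le|\SL_k(\mathcal{O}_L/\mathfrak{m})|$ --- the inequality in the wrong direction. The minimal quotient detecting $g_n$ is only a (possibly proper) quotient of a congruence quotient, and nothing in your argument prevents it from being much smaller than every full $\SL_k(\mathcal{O}_L/\mathfrak{m})$ that detects $g_n$; already $\PSL_k$ of prime level is a strictly smaller quotient detecting all non-central elements. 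This is exactly the issue the paper spends the second half of its proof on: it isolates the case of \emph{essential} subgroups of $\SL_k(\mathbb{Z}/d\mathbb{Z})$ and invokes Proposition 6.1.1 of Lubotzky--Segal to bound such quotients from below, and even then it only obtains $k_G(g_n)\succeq n$, not $n^{k^2-1}$. That weaker (but justified) bound breaks your bookkeeping: with $\|g_n\|_S=O(n^2)$ --- and note the elementary commutator recursion actually yields $O((\log N_n)^3)=O(n^3)$ rather than $O(n^2)$ --- and only $k_G(g_n)\succeq n$, you would conclude $F(m)\succeq m^{1/2}$ (or $m^{1/3}$), short of the theorem. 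The paper closes the loop by citing Lubotzky--Mozes--Raghunathan to write $E_{ij}(\operatorname{lcm}(1,\dots,n))$ with $O(n)$ generators; the logarithmic word-length estimate you relegate to a closing remark is in fact needed in the main argument unless you can genuinely justify a superlinear lower bound on $k_G(g_n)$.
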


Notice that the asymptotic upper bound for $F_G$ depends only on the dimension of the algebraic group $\SL_n(\mathbb{C})$ and not on the field $L$.
Further, since $\mathbb{Z}*\mathbb{Z} \leq SL_2( \mathbb{Z})$, we have, as a consequence of Theorem \ref{lineargrouptheorem} and Lemma \ref{groupextlemma}, that $F_{F}(n) \preceq n^3$ for any finitely generated non-abelian free group $F$.
The author, unfortunately, does not know of a sharper upper bound for the growth of $F_F(n)$.

There are examples of groups with sub-polynomial and super-polynomial $F_G$ growth.
Let the \emph{Hirsch number} of $G$, denoted $h(G)$, be the number of infinite cyclic factors in a series for $G$ with cyclic or finite factors.
In Section 3 we find a general bound for nilpotent groups of a given Hirsch number:

\begin{theorem} \label{nilpotentgrouptheorem}
Let $P$ be a finitely generated nilpotent group.
Then $F_{P}(n) \preceq \log(n)^{h(P)}$.
\end{theorem}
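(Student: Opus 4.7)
The plan is to embed $P$ faithfully into a group of upper-triangular unipotent integer matrices and then reduce modulo a well-chosen small prime. First, using the behavior of $F_G$ under finite extensions (from Section 1), I would pass to a torsion-free finite-index subgroup of $P$, and so assume $P$ is torsion-free without changing $h(P)$ or the asymptotic class of $F_P$. By a classical theorem of Mal'cev, there is then a faithful embedding $\iota : P \hookrightarrow U_m(\mathbb{Z})$ for some $m$, where $U_m$ denotes upper-triangular unipotent matrices. A standard estimate---entries of a product of unipotent matrices grow polynomially in the length of the product, since the $(i,j)$-entry involves at most $m-1$ multiplications from the diagonal---shows that for $\|g\|_S \leq n$, the entries of $\iota(g) - I$ are bounded in absolute value by $n^{C_0}$, where $C_0 = C_0(P, S, \iota)$.

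Fix $g \in P \setminus \{1\}$ with $\|g\|_S \leq n$ and pick a nonzero entry $a := (\iota(g) - I)_{ij}$. Since $|a| \leq n^{C_0}$ and by Chebyshev's theorem the product of the first $k$ primes is at least $e^{c\, p_k}$ for an absolute constant $c > 0$, the assumption that each of $p_1, \ldots, p_k$ divides $a$ would force $p_k = O(\log n)$. Hence there exists a prime $p \leq C_1 \log n$ with $p \nmid a$, and the composition $\pi_p : P \xrightarrow{\iota} U_m(\mathbb{Z}) \to U_m(\mathbb{F}_p)$ sends $g$ to a nontrivial element. Therefore $k_P(g) \leq |\pi_p(P)|$, and it suffices to bound $|\pi_p(P)|$ by a constant times $p^{h(P)}$ uniformly in $p$; plugging in $p = O(\log n)$ then yields the theorem.

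This last step is where I expect the real work. To carry it out, let $\mathbf{H}$ denote the Zariski closure of $\iota(P)$ in $U_m$; then $\mathbf{H}$ is a unipotent algebraic $\mathbb{Q}$-group, and its dimension equals $h(P)$, since Mal'cev's theorem realizes $P$ as a cocompact lattice in $\mathbf{H}(\mathbb{R})$. Choosing a $\mathbb{Z}[1/N_0]$-model of $\mathbf{H}$ for a suitable $N_0$, the reduction $\mathbf{H}_{\mathbb{F}_p}$ is smooth, connected, and unipotent of the same dimension for all $p \nmid N_0$, yielding $|\mathbf{H}(\mathbb{F}_p)| = p^{h(P)}$; since $\pi_p(P) \subseteq \mathbf{H}(\mathbb{F}_p)$, the desired bound on $|\pi_p(P)|$ follows, with the finitely many bad primes absorbed into the constant. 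A more elementary alternative avoids algebraic groups by using a Mal'cev basis $x_1, \ldots, x_{h(P)}$ for $P$ together with the Hall-Petresco formulas to count the image of $P$ in $U_m(\mathbb{Z}/p\mathbb{Z})$ as a nilpotent $p$-group generated by at most $h(P)$ elements, each of order dividing $p$ up to a bounded correction. Either way, combining the estimates gives $k_P(g) \leq C(\log n)^{h(P)}$, whence $F_P(n) \preceq \log(n)^{h(P)}$.
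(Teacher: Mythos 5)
Your proposal follows essentially the same route as the paper: reduce to the torsion-free case, embed via Mal'cev into $U_m(\mathbb{Z})$, bound the entries of a word of length $n$ polynomially, find a prime $p = O(\log n)$ not dividing some nonzero entry of $\iota(g)-I$ (this is exactly what Theorem \ref{Zcase} packages), and then bound the image in $U_m(\mathbb{F}_p)$ by $p^{h(P)}$. For the last step, the paper uses the cheapest version of your ``elementary alternative'': for $p > m$ the group $U_m(\mathbb{F}_p)$ has exponent $p$, so every cyclic factor of a polycyclic series for $P$ (of which $h(P)$ are infinite and the rest vanish after the torsion reduction) maps to a cyclic group of order dividing $p$, giving $|\pi_p(P)| \leq p^{h(P)}$ directly. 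The Zariski-closure argument you sketch also works but is much heavier than needed.

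The one step that does not work as written is the reduction to the torsion-free case. Passing to a torsion-free \emph{finite-index subgroup} $H \leq P$ and invoking the finite-extension lemma of Section 1 gives only $F_P(n) \preceq \bigl(F_H(n)\bigr)^{[P:H]} \preceq \log(n)^{h(P)\,[P:H]}$, which is a strictly weaker asymptotic class than $\log(n)^{h(P)}$; so this route does change the exponent. The correct reduction is to quotient by the torsion subgroup $T$, which is a finite normal subgroup of the finitely generated nilpotent group $P$: for $g \notin T$ the image of $g$ in the torsion-free group $P/T$ is nontrivial with no larger word length and $h(P/T) = h(P)$, while the finitely many nontrivial $g \in T$ contribute only a bounded amount to $F_P$. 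With that substitution your argument is complete.
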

\noindent
In Section \ref{grigsection}, we present some calculations that show that the first Grigorchuk group has exponential $F_G$ growth.

In the last section we restrict our attention to finite nilpotent quotients and the asymptotic growth of the associated function for these quotients. To be precise, let
$$k_G^{nil}(g) = \min \{ |Q| \;:\;\text{ $Q$ is a finite nilpotent quotient of $G$ where $g \neq 1$}\}$$
and
$$
F_G^{nil}(n) = \max \{ k_G^{nil}(g) \;:\;\| g \| \leq n \}.
$$
Then we get the following characterization of finitely generated nilpotent groups in Section 5.

\begin{theorem} \label{bigtheorem}
Let $G$ be any finitely generated group.
Then $F_G^{nil}(n)$ has growth which is polynomial in $\log(n)$ if and only if
$G$ is nilpotent.
\end{theorem}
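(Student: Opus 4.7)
The plan is to prove each direction separately, the reverse being the substantive one.

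For the forward direction, if $G$ is finitely generated nilpotent then every quotient of $G$ is nilpotent, so every finite quotient is finite nilpotent; hence $k_G = k_G^{nil}$, $F_G^{nil} \simeq F_G$, and Theorem \ref{nilpotentgrouptheorem} gives $F_G^{nil}(n) \preceq \log(n)^{h(G)}$.

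For the contrapositive of the reverse direction, assume $G$ is finitely generated but not nilpotent; I will show $F_G^{nil}(n) \succeq n$, which is super-polynomial in $\log n$. If some $g \neq 1$ lies in every normal subgroup $N$ with $G/N$ finite nilpotent, then $k_G^{nil}(g) = \infty$ and $F_G^{nil}(n) = \infty$ for $n \geq \|g\|_S$, so we are done. Otherwise $G$ is residually (finite nilpotent); since every finite nilpotent quotient kills some $\gamma_c(G)$, this forces $\bigcap_c \gamma_c(G) = 1$. If $\gamma_c(G) = \gamma_{c+1}(G)$ for some $c$, then $\gamma_{c+1} = [\gamma_c, G] = [\gamma_{c+1}, G] = \gamma_{c+2}$ and the series stabilizes; its stable value equals the intersection, which is trivial, so $G$ is nilpotent, contradicting our assumption. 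Hence $\gamma_c(G) \supsetneq \gamma_{c+1}(G)$ for every $c$.

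Since $\gamma_c(G) / \gamma_{c+1}(G)$ is abelian and generated by the images of left-normed commutators $[s_{i_1}, s_{i_2}, \ldots, s_{i_c}]$ in the fixed generating set $S$, some such commutator $g_c$ is nontrivial modulo $\gamma_{c+1}(G)$, and hence nontrivial in $G$; iterating the elementary bound $\|[u,v]\|_S \leq 2\|u\|_S + 2\|v\|_S$ yields $\|g_c\|_S \leq A \cdot 2^c$ for a constant $A$ depending only on $S$. If $Q$ is any finite nilpotent quotient of $G$ in which $g_c$ survives, then the image of $g_c$ lies in $\gamma_c(Q) \setminus \{1\}$, so $Q$ has nilpotency class at least $c$; by the Sylow decomposition some Sylow $p$-subgroup has class at least $c$, and its lower central series is a strictly descending chain of length $c$ with each factor a nontrivial $p$-group, so $|Q| \geq p^c \geq 2^c$. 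Therefore $k_G^{nil}(g_c) \geq 2^c$; for arbitrary $n$, selecting the largest $c$ with $A \cdot 2^c \leq n$ gives $F_G^{nil}(n) \geq 2^c \geq n/(2A)$, establishing $F_G^{nil}(n) \succeq n$.

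The main obstacle is the length control on the elements $g_c$: we need a nontrivial element of $\gamma_c(G)$ whose word length grows only exponentially in $c$ rather than doubly exponentially, since the matched exponentials $\|g_c\|_S \lesssim 2^c$ and $k_G^{nil}(g_c) \gtrsim 2^c$ are precisely what convert the finite-nilpotent-quotient obstruction into a linear (hence super-polylogarithmic) lower bound on $F_G^{nil}$. The strict descent of the lower central series under residual finite-nilpotence is the small but essential subsidiary point that makes the basic-commutator argument go through.
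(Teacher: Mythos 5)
Your proof is correct and follows essentially the same route as the paper: the forward direction reduces to Theorem \ref{nilpotentgrouptheorem} since all finite quotients of a nilpotent group are nilpotent, and the reverse direction produces a nontrivial weight-$c$ left-normed commutator of generators of word length $O(2^c)$ that forces every finite nilpotent quotient in which it survives to have class at least $c$ and hence order at least $2^c$, exactly as in Lemma \ref{secondlemma}. The only (harmless) variation is that you locate the nontrivial commutator via strict descent of the lower central series and generation of $\gamma_c(G)/\gamma_{c+1}(G)$ by simple commutators, while the paper invokes normal generation of $\gamma_c(G)$ itself by such commutators.
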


The ingredients used in the proofs of the above theorems include the prime number theorem, Cebotar\"ev's Density Theorem, the Strong Approximation Theorem, the congruence subgroup property of $\SL_k (\mathbb{Z})$ for $k > 2$, and Mal'cev's representation theorem for nilpotent groups.

\subsubsection*{Acknowledgements}
I would like to extend special thanks to my advisor, Benson Farb for suggesting this topic, providing encouragement, sound advice, good teaching, and many good ideas.
I am also especially grateful to my coadvisor, Ben McReynolds, for his time, good ideas, and comments on the paper.
I would also like to acknowledge 
Tom Church, 
Matt Day, 
Asaf Hadari, 
Justin Malestein, 
Aaron Marcus, 
Shmuel Weinberger, and
Thomas Zamojski for reading early drafts of this paper and for stimulating conversations on this work.
Finally, it is my pleasure to thank the excellent referee for making comments, suggestions, and a number of corrections on an earlier draft of this paper.

\section{Basic Theory}

In this first section, we lay out some basic lemmas for the sequel.
Recall that a group $G$ is \emph{residually finite} if for any nontrivial $g$ in $G$ there exists a finite group $Q$ and a homomorphism $\psi : G \to Q$ such that $g \notin \ker \psi$.
We begin with a lemma that when applied twice with $G = H$, will let us drop the decoration $S$ in $F_G^S(n)$.

\begin{lemma} \label{groupextlemma}
Let $G$ and $H \leq G$ be residually finite groups finitely generated by $S$ and $L$ respectively.
Then $F_H^L(n) \preceq  F_G^S(n)$.
\end{lemma}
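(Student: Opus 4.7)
The plan is to relate the word length in $L$ to the word length in $S$, and then observe that restricting a finite quotient of $G$ to $H$ yields a finite quotient of $H$ of no larger size on which the element in question survives.

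First I would write each generator $\ell \in L$ as a word in $S$ inside $G$ and set $C := \max_{\ell \in L} \| \ell \|_S$. Then for any $h \in H$ with $\| h \|_L \leq n$, one has $\| h \|_S \leq C n$, since any expression for $h$ in the generators $L$ can be converted into an expression in $S$ of length at most $C$ times longer.

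Next, given a nontrivial $h \in H$, I would pick a finite quotient $\psi : G \to Q$ realizing the minimum in the definition of $k_G(h)$, so $|Q| = k_G(h)$ and $\psi(h) \neq 1$. The restriction $\psi|_H : H \to \psi(H)$ exhibits $\psi(H)$ as a finite quotient of $H$ of order $|\psi(H)| \leq |Q|$ in which $h$ has nontrivial image. Hence $k_H(h) \leq k_G(h)$ for every nontrivial $h \in H$.

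Combining these two observations, for any nontrivial $h \in H$ with $\|h\|_L \leq n$ we get
\[
k_H(h) \;\leq\; k_G(h) \;\leq\; F_G^S(Cn),
\]
and taking the maximum over such $h$ yields $F_H^L(n) \leq F_G^S(Cn)$, which is precisely $F_H^L \preceq F_G^S$. There is no real obstacle here; the only thing to be slightly careful about is to note that a nontrivial element of $H$ is automatically nontrivial in $G$, so that the quotient $\psi$ produced by the definition of $k_G(h)$ is available to us.
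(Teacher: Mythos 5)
Your proposal is correct and follows essentially the same two-step argument as the paper: first $k_H(h) \leq k_G(h)$ by restricting quotients of $G$ to $H$, then $\|h\|_S \leq C\|h\|_L$ by rewriting the generators $L$ in terms of $S$, giving $F_H^L(n) \leq F_G^S(Cn)$. Your extra observation that the restricted quotient is $\psi(H)$ with $|\psi(H)| \leq |Q|$ is a slightly more careful phrasing of the same point.
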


\begin{proof}
As any homomorphism of $G$ to $Q$ restricts to a homomorphism of $H$ to $Q$, it follows that $k_H(h) \leq k_G(h)$ for all $h \in H$.
Hence,
\begin{equation} \label{firstlemmaeq1}
F_H^L(n) = \sup\{  k_H (g) \: :\: \| g \|_L \leq n \} \leq 
\sup\{  k_G (g) \:: \: \| g \|_L \leq n \}.
\end{equation}

\noindent
Further, there exists a $C>0$ such that any element in $L$ can be written in terms of at most $C$ elements of $S$.
Thus,

\begin{equation} \label{firstlemmaeq2}
\{ h \in H : \| h \|_L \leq n \} \subseteq \{ g \in G : \| g \|_S \leq Cn \}.
\end{equation}

\noindent
So by (\ref{firstlemmaeq1}) and (\ref{firstlemmaeq2}), we have that
$$
F_H^L(n) \leq 
\sup\{  k_G (g) : \| g \|_L \leq n \}
\leq 
\sup\{  k_G (g) :  \| g \|_S \leq C n \}
= F_G^S(C n),$$
as desired.
\end{proof}

\noindent
The previous lemma implies that the growth functions for all non-abelian finitely generated free groups are equivalent.
The next lemma shows that $F_G$ is well behaved under direct products.
We leave the proof as an exercise to the reader, as it is straightforward.

\begin{lemma} \label{prodbehavior}
Let $G$ and $H$ be residually finite groups generated by finite sets $S$ and $T$ respectively.
Then
$$\max \{ F_G^S(n), F_H^T(n) \} = F_{G \times H}^J (n),$$
where $J = S \times T$.
\end{lemma}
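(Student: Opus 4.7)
The plan is to establish both inequalities of the claimed equality, interpreting $J$ as the natural generating set of $G \times H$ obtained by identifying $S$ with $S \times \{1_H\}$ and $T$ with $\{1_G\} \times T$, so that $\|(g,h)\|_J = \|g\|_S + \|h\|_T$. The engine of the proof is a correspondence between finite quotients of $G \times H$ that detect a pair $(g,h)$ and finite quotients of the individual factors that detect the nontrivial coordinates.

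For the upper bound $F_{G \times H}^J(n) \leq \max\{F_G^S(n), F_H^T(n)\}$, I take any nontrivial $(g,h)$ with $\|(g,h)\|_J \leq n$, which forces $\|g\|_S \leq n$ and $\|h\|_T \leq n$. Since $(g,h) \neq (1,1)$, at least one coordinate is nontrivial; say $g \neq 1$. Composing the projection $G \times H \to G$ with a minimal finite quotient of $G$ detecting $g$ produces a finite quotient of $G \times H$ of size $k_G(g)$ in which $(g,h)$ is still nontrivial, so $k_{G \times H}(g,h) \leq k_G(g) \leq F_G^S(n)$, and the case $h \neq 1$ is handled symmetrically.

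For the matching lower bound, I choose $g \in G$ with $\|g\|_S \leq n$ realizing $F_G^S(n) = k_G(g)$ and consider $(g,1) \in G \times H$, which has $J$-length at most $n$. I claim $k_{G \times H}(g,1) = k_G(g)$: the inequality $\leq$ comes from pulling a quotient back along the projection $G \times H \to G$, while the reverse inequality uses that any finite quotient $\pi : G \times H \to Q$ with $\pi(g,1) \neq 1$ restricts to a finite quotient of $G \cong G \times \{1_H\}$ of size at most $|Q|$ that detects $g$. Running the analogous argument with $H$ gives $F_{G \times H}^J(n) \geq \max\{F_G^S(n), F_H^T(n)\}$. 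The only mildly delicate step is the identity $k_{G \times H}(g,1) = k_G(g)$, which rests on the elementary fact that restricting a quotient to a subgroup yields a quotient of the subgroup of no larger cardinality; the rest is bookkeeping on word lengths.
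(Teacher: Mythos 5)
Your proof is correct, and since the paper leaves this lemma as an exercise, your argument (project to detect the upper bound, embed $(g,1)$ and restrict quotients for the lower bound) is exactly the straightforward one intended. Note only that you quietly replace the paper's $J=S\times T$ with $(S\times\{1_H\})\cup(\{1_G\}\times T)$; this is the sensible reading, and the two generating sets differ in word length by at most an additive constant, which is harmless for the asymptotic equivalences actually used elsewhere in the paper.
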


\noindent
The next lemma shows that growth under finite group extensions is moderately well-behaved. We leave the proof as an exercise to the reader, as it is also straightforward.

\begin{lemma}
\label{finiteindexsubgrouplemma}
Let $H \leq G$ be two finitely generated groups with $[G:H]< \infty$.
Then $F_G(n) \preceq (F_H (n))^{[G:H]}$.
\end{lemma}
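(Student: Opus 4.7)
My plan is to combine a two-case analysis (based on whether $g \in H$) with an induced (wreath product) representation that lifts finite quotients of $H$ to finite quotients of $G$. Write $d = [G:H]$ and fix left coset representatives $\sigma_1 = 1, \sigma_2, \ldots, \sigma_d$ for $G/H$. If a nontrivial $g \in G$ lies outside $H$, then the left translation action of $G$ on $G/H$ already yields a map $G \to S_d$ whose kernel $\bigcap_x x H x^{-1}$ is contained in $H$; this $g$ has nontrivial image, giving the crude bound $k_G(g) \leq d!$.

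For the main case $g \in H$, I will pick $\phi : H \to Q$ realizing $k_H(g)$ and build the induced representation $\psi : G \to Q \wr S_d = Q^d \rtimes S_d$ in the standard way: writing $x \sigma_i = \sigma_{\pi(x)(i)} h_i(x)$ with $h_i(x) \in H$, set $\psi(x) = (\phi(h_1(x)), \ldots, \phi(h_d(x)); \pi(x))$. A short direct calculation (using normality of $\ker\phi$ in $H$) will show that $\psi$ is a homomorphism with $\ker \psi = \bigcap_{x \in G} x(\ker \phi) x^{-1} \subseteq \ker \phi$. Since $g \notin \ker \phi$, we conclude $\psi(g) \neq 1$ and hence $k_G(g) \leq |Q \wr S_d| = d! \cdot k_H(g)^d$. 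Verifying that $\psi$ is a well-defined homomorphism with the stated kernel is the only mildly delicate step, but it is a routine wreath-product bookkeeping computation once one fixes conventions for the semidirect product.

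Finally, to promote the pointwise inequality $k_G(g) \leq d! \cdot k_H(g)^d$ into the claimed inequality of growth functions, I need to bound $\|g\|_L$ by $\|g\|_S$ for $g \in H$. This is handled by the classical Schreier rewriting procedure: tracking the coset visited after each letter in an $S$-word of length $n$ for $g$ rewrites $g$ as a product of $n$ Schreier generators of the form $\sigma_i s \sigma_j^{-1} \in H$, each of which has length in $L$ bounded by some constant $C = C(d, S, L)$. Hence $\|g\|_S \leq n$ forces $\|g\|_L \leq Cn$, so $k_H(g) \leq F_H^L(Cn)$, and combining with the case analysis gives $F_G^S(n) \leq d! \cdot F_H^L(Cn)^d$, i.e., $F_G \preceq F_H^{[G:H]}$ as claimed.
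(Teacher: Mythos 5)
Your proof is correct. The paper actually leaves this lemma as an exercise, so there is no official argument to diverge from; what you give is the standard solution, and all three ingredients are sound: the coset action disposes of $g \notin H$ with the bound $d!$; the induced map into $Q \wr S_d$ has kernel $\bigcap_i \sigma_i(\ker\phi)\sigma_i^{-1} \subseteq \ker\phi$ (using $\sigma_1 = 1$), giving $k_G(g) \leq d!\,k_H(g)^{d}$ for $g \in H$; and Schreier rewriting converts $\|g\|_S \leq n$ into $\|g\|_L \leq Cn$, which is exactly what is needed to pass from the pointwise bound to $F_G^S(n) \leq d!\,\bigl(F_H^L(Cn)\bigr)^{d}$, i.e.\ $F_G \preceq (F_H)^{[G:H]}$ in the paper's sense of $\preceq$.
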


\noindent
\section{Arithmetic groups}

In order to quantify residual finiteness for arithmetic groups, we require some auxiliary results concerning the ring analogue of growth for rings of algebraic integers $\mathcal{O}_L$.

\subsection{The integers}

Fix the generating set $\{ 1 \}$ for the integers $\mathbb{Z}$.
For $\mathbb{Z}$ we can do much better than the obvious bound $F_\mathbb{Z}(n) \leq n+1$.
In fact, the elements with the largest value of $k_\mathbb{Z}$ are of the form $\psi(r):= lcm(1, \ldots, r)$.


\begin{lemma} \label{middlelemma}
If $l_1 < \psi(m) < l_2 < \psi(m+1)$, then 
$k_\mathbb{Z}(\psi(m))$ is greater than or equal to $k_\mathbb{Z}(l_1)$ and $k_\mathbb{Z}(l_2)$.
\end{lemma}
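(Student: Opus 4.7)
The plan is to unwind $k_\mathbb{Z}$ into an elementary divisibility statement and then chase the definition of $\psi$. The key observation is that every finite quotient of $\mathbb{Z}$ has the form $\mathbb{Z}/q\mathbb{Z}$, and a nonzero integer $n$ survives in $\mathbb{Z}/q\mathbb{Z}$ precisely when $q \nmid n$. Hence $k_\mathbb{Z}(n) = \min\{q \geq 2 : q \nmid n\}$, the smallest positive integer that fails to divide $n$.

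Next I would pin down $k_\mathbb{Z}(\psi(m))$ exactly. By construction every $q \in \{1, \ldots, m\}$ divides $\psi(m) = \text{lcm}(1, \ldots, m)$. The hypothesis presumes the interval $(\psi(m), \psi(m+1))$ is nonempty, so $\psi(m) < \psi(m+1)$, which forces $m+1$ to be a prime power; in particular $m+1$ contributes a genuinely new prime power to $\text{lcm}(1, \ldots, m+1)$ and therefore does not divide $\psi(m)$. This gives the clean identity $k_\mathbb{Z}(\psi(m)) = m+1$.

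It then remains to bound $k_\mathbb{Z}(l_1)$ and $k_\mathbb{Z}(l_2)$ from above by $m+1$. For $l_1 < \psi(m)$, observe that if every integer in $\{1, \ldots, m\}$ divided $l_1$, then $\psi(m) = \text{lcm}(1, \ldots, m)$ would divide $l_1$ and hence $l_1 \geq \psi(m)$, contradicting the hypothesis. So some $q \leq m$ fails to divide $l_1$, yielding $k_\mathbb{Z}(l_1) \leq m < m+1 = k_\mathbb{Z}(\psi(m))$. The same divisibility argument applied one level up to $l_2 < \psi(m+1)$ produces some $q \leq m+1$ not dividing $l_2$, and therefore $k_\mathbb{Z}(l_2) \leq m+1 = k_\mathbb{Z}(\psi(m))$, completing both inequalities.

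The argument is essentially a dictionary check between $k_\mathbb{Z}$ and the divisor structure of $\psi(m)$, so there is no real obstacle; the only point that requires any care is noticing that the existence of $l_2$ in the statement implicitly rules out the degenerate case $\psi(m) = \psi(m+1)$ and hence forces $m+1$ to be a prime power, which is precisely what makes the equality $k_\mathbb{Z}(\psi(m)) = m+1$ hold.
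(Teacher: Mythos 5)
Your proof is correct, and it takes a genuinely different (and cleaner) route than the paper's. The paper argues by induction on $m$: the bound for $l_1$ comes from the inductive hypothesis together with an unproved assertion that $k_\mathbb{Z}(\psi(\cdot))$ is nondecreasing, while the bound for $l_2$ comes from the same divisibility argument you use (if every $j\le m+2$ divided $l_2$ then $\psi(m+2)\mid l_2$, forcing $l_2\ge\psi(m+2)$, a contradiction). You instead identify $k_\mathbb{Z}(n)$ with the least non-divisor of $n$, observe correctly that the mere existence of $l_2$ in the hypothesis forces $\psi(m)<\psi(m+1)$ and hence $m+1$ to be a prime power, and conclude the exact value $k_\mathbb{Z}(\psi(m))=m+1$; both upper bounds $k_\mathbb{Z}(l_1)\le m$ and $k_\mathbb{Z}(l_2)\le m+1$ then follow directly from the divisibility argument with no induction. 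What your version buys is that the monotonicity the paper invokes without proof becomes transparent, and the degenerate case $\psi(m)=\psi(m+1)$ (where the statement is vacuous) is handled explicitly. The one implicit assumption, shared with the paper, is that $l_1$ and $l_2$ are positive integers, so that $\psi(\cdot)\mid l_i$ implies $l_i\ge\psi(\cdot)$.
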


\begin{proof}
We prove this by induction on $m$.
The base case with $\psi(2)=2$ and $\psi(3)=6$ are easily checked.
For the inductive step, suppose the statement is true for $m < n$, and let
$l_1 < \psi(m+1) < l_2 < \psi(m+2)$.
By the inductive hypothesis, and the fact that $k_\mathbb{Z}(\psi(\cdot))$ is nondecreasing, we deduce that $k_\mathbb{Z}(\psi(m+1)) \geq k_\mathbb{Z}(l_1)$.
In order for $k_\mathbb{Z}(\psi(m+1)) < k_\mathbb{Z}(l_2)$ we must have that $l_2$ satisfies $j | l_2$ for all $j=1, \ldots, m+2$.
Thus $l_2$ is a multiple of $1, \ldots, m+2$ and thus $l_2 \ge \psi(m+2)$, which is absurd.
\end{proof}


The function $\psi(x)$ is well-studied, as the asymptotic behavior of $\psi$ is used to prove the prime number theorem in analytic number theory.
In fact (see Proposition 2.1, page 189, in Stein and Shakarchi \cite{steinshak}),
\begin{equation} \label{complexresult}
\lim\limits_{x \to \infty} \frac{\log(\psi(x))}{x} = 1.
\end{equation}
Since Lemma \ref{middlelemma} shows that $F_\mathbb{Z}$ and $\psi$ are related, it is no surprise that (\ref{complexresult}) is used to prove the following theorem.

\begin{theorem} \label{Zcase}
We have $F_\mathbb{Z}(n) \simeq \log(n)$.
\end{theorem}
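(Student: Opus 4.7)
The strategy is to use Lemma \ref{middlelemma} to locate the maxima of $k_\mathbb{Z}$, and then translate the size estimate $\log \psi(m)\sim m$ from (\ref{complexresult}) into the claimed logarithmic growth.

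First I would observe that for $G=\mathbb{Z}$ every finite quotient has the form $\mathbb{Z}/d\mathbb{Z}$, so
\[
k_\mathbb{Z}(m)=\min\{d\geq 1 : d\nmid m\}.
\]
In particular, if $k_\mathbb{Z}(m)=d$, then $1,2,\ldots,d-1$ all divide $m$, forcing $\psi(d-1)\mid m$ and hence $|m|\geq \psi(d-1)$. This will give the upper bound.

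Next I would pin down $F_\mathbb{Z}(n)$. By Lemma \ref{middlelemma}, on each interval $[\psi(m),\psi(m+1))$ the function $k_\mathbb{Z}$ is maximized at $\psi(m)$. Thus for $n$ with $\psi(m)\leq n<\psi(m+1)$,
\[
F_\mathbb{Z}(n)=k_\mathbb{Z}(\psi(m)).
\]
Since $k_\mathbb{Z}(\psi(m))$ is the smallest integer not dividing $\psi(m)$, it is at least $m+1$ and at most the smallest prime exceeding $m$ (or equivalently, the smallest prime power exceeding $m$), so by Bertrand's postulate $k_\mathbb{Z}(\psi(m))$ lies between $m+1$ and $2m$, in particular $k_\mathbb{Z}(\psi(m))\simeq m$.

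It then remains only to translate between $m$ and $\log n$. From (\ref{complexresult}) we have $\log\psi(m)\sim m$. The upper bound $|m|\geq \psi(k_\mathbb{Z}(m)-1)$ established above gives $k_\mathbb{Z}(m)-1\leq (1+o(1))\log n$ for $|m|\leq n$, so $F_\mathbb{Z}(n)\preceq \log n$. For the lower bound, choose $m$ maximal with $\psi(m)\leq n$; then $m\geq (1-o(1))\log n$, and evaluating $F_\mathbb{Z}(n)\geq k_\mathbb{Z}(\psi(m))\geq m+1$ yields $F_\mathbb{Z}(n)\succeq \log n$.

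There is essentially no serious obstacle here: the one subtlety is ensuring the asymptotic equivalence goes through cleanly despite the irregular spacing of the jump points $\psi(m)$, but Lemma \ref{middlelemma} already handles this by certifying that between consecutive values of $\psi$ the function $k_\mathbb{Z}$ does not exceed its value at the left endpoint. Once that reduction is in place, the result is a direct consequence of (\ref{complexresult}).
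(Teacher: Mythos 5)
Your proposal is correct and follows essentially the same route as the paper: both arguments use Lemma \ref{middlelemma} to reduce $F_\mathbb{Z}(n)$ to $k_\mathbb{Z}(\psi(m))$ at the largest $m$ with $\psi(m)\leq n$, bound that quantity between $m+1$ and $2m$ (via Bertrand's postulate), and convert to $\log n$ using (\ref{complexresult}). Your version merely spells out more explicitly why $k_\mathbb{Z}(m)$ is the least non-divisor of $m$ and why the two-sided bound on $k_\mathbb{Z}(\psi(m))$ holds, which the paper leaves implicit.
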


\begin{proof}
Lemma \ref{middlelemma} gives
$$
\frac{F_\mathbb{Z}(n)}{\log(n)} = \frac{k_\mathbb{Z} (\psi(k_n))}{\log(n)}
$$
where $k_n$ is the maximum value of $m$ with $\psi(m) \leq n$.
Since $\log$ is increasing we have 
\begin{equation}
\frac{k_\mathbb{Z}(\psi(k_n))}{\log(\psi(k_n + 1))} \leq
\frac{k_\mathbb{Z}(\psi(k_n))}{\log(n)} \leq
\frac{k_\mathbb{Z}(\psi(k_n))}{\log(\psi(k_n))}. \label{longineq}
\end{equation}

The left hand side of (\ref{longineq}) with $k_\mathbb{Z}(\psi(k_n)) \geq k_n+1$ and (\ref{complexresult}) gives
$$\lim_{n\to \infty} \frac{k_\mathbb{Z} (\psi(k_n))}{\log(n)} \geq 1.$$
The right hand side of (\ref{longineq}) with $k_\mathbb{Z}(\psi(k_n)) \leq  2 k_n$ and (\ref{complexresult}) gives
$$\lim_{n\to \infty} \frac{k_\mathbb{Z} (\psi(k_n))}{\log(n)} \leq 2.$$
Thus $F_\mathbb{Z}(n) \simeq \log(n)$ as desired.
\end{proof}

\begin{corollary}
We have $F_{\mathbb{Z}^d}(n) \simeq \log(n)$.
\end{corollary}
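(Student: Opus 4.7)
The plan is to deduce this corollary directly from Theorem \ref{Zcase} combined with the product behavior of $F_G$ recorded in Lemma \ref{prodbehavior}. Writing $\mathbb{Z}^d$ as a direct product of $d$ copies of $\mathbb{Z}$ and iterating Lemma \ref{prodbehavior} gives
\[
F_{\mathbb{Z}^d}^J(n) \;=\; \max\{F_{\mathbb{Z}}^{\{1\}}(n), \ldots, F_{\mathbb{Z}}^{\{1\}}(n)\} \;=\; F_{\mathbb{Z}}^{\{1\}}(n),
\]
where $J$ is the natural generating set of $\mathbb{Z}^d$ built from the generator $1$ in each factor. By Theorem \ref{Zcase}, the right-hand side is $\simeq \log(n)$.

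To upgrade this to an arbitrary finite generating set, I would appeal to the remark following Lemma \ref{groupextlemma}: applying that lemma twice with $G = H = \mathbb{Z}^d$ but with two different finite generating sets shows that the $\simeq$-class of $F_{\mathbb{Z}^d}$ is independent of the chosen generating set. Combining this independence with the computation above yields $F_{\mathbb{Z}^d}(n) \simeq \log(n)$.

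There is essentially no obstacle here—the corollary is a one-line consequence of the tools already assembled. The only mild bookkeeping concern is ensuring that the generating set $J$ produced by iterating Lemma \ref{prodbehavior} is compatible with whichever generating set of $\mathbb{Z}^d$ one prefers to start from, which is precisely what the generating-set independence observation resolves.
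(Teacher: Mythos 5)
Your proposal is correct and follows exactly the paper's route: the corollary is deduced by iterating Lemma \ref{prodbehavior} over the $d$ factors and invoking Theorem \ref{Zcase}, with generating-set independence handled by Lemma \ref{groupextlemma}. The extra bookkeeping you spell out is fine but the paper treats it as immediate.
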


\begin{proof}
This follows immediately from Lemma \ref{prodbehavior} and Theorem \ref{Zcase}.
\end{proof}

\subsection{Rings of Integers}

Let $L/\mathbb{Q}$ be a finite extension, and let $\mathcal{O}_L$ be the ring of integers.
With these conditions and some work, it can be shown that $\mathcal{O}_L$ is a residually finite ring and a finitely generated abelian group.
We need to define $F_{\mathcal{O}_L}$ while keeping the ring structure of $\mathcal{O}_L$ in mind, because $\SL_n(-)$ is a functor from the category of rings to the category of groups.
Equip $\mathcal{O}_L$ with a word metric as a finitely generated abelian group and define
$$k_{\mathcal{O}_L} (g) := \min \{ |Q| \::\: \psi(g) \neq 1, \psi : \mathcal{O}_L \to Q\},$$
 where the maps $\psi$ are ring homomorphisms, and 
 $$F_{\mathcal{O}_L}(n) := \max\{  k_{\mathcal{O}_L} (g) \;:\;\| g \| \leq n \}.$$
The obvious analogue of Lemma \ref{groupextlemma} holds for $F_{\mathcal{O}_L}(n)$.

To study the asymptotic behavior of $F_{\mathcal{O}_L}(n)$, we need some algebraic number theory.
If $\mathfrak{p}$ is a prime ideal of $\mathbb{Z}$, then $\mathfrak{p} \mathcal{O}_L$ is an ideal of $\mathcal{O}_L$ and has factorization
$$
\mathfrak{p}\mathcal{O}_L = \mathfrak{p}_1^{e_1} \cdots \mathfrak{p}_c^{e_c}
$$
where $\mathfrak{p}_i$ are distinct.
Let $f_{\mathfrak{p}_i}$ be the degree of the field extension $[\mathcal{O}_L/\mathfrak{p}_i:\mathbb{Z}/\mathfrak{p}]$.
If $e_i = 1$ and $f_{\mathfrak{p}_i} = 1$ for all $i$, we say that $\mathfrak{p}$ \emph{splits} in $\mathcal{O}_L$.
In the case where $\mathfrak{p} = (p)$ where $p$ is a prime number in $\mathbb{Z}$, we have that $\mathfrak{p}$ splits only if each prime $\mathfrak{p}_i$ that appears in the factorization satisfies $\mathcal{O}_L/\mathfrak{p}_i = p$.
Thus, the primes $(p)$ that split are nice in that they then give small quotients for $\mathcal{O}_L$.
These nice primes appear quite often.
Indeed, the Cebotar\"ev Density Theorem (see Theorem 11, page 414 in Lubotzky and Segal \cite{lubsegal-2003}) implies that the natural density of such primes is non-zero in the set of all primes.
This implication, along with the prime number theorem, gives the following Theorem:

\begin{theorem} \label{OLbound}
We have $F_{\mathcal{O}_L}(n) \simeq \log(n)$.
\end{theorem}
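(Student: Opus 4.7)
The plan is to establish $F_{\mathcal{O}_L}(n) \simeq \log(n)$ by proving the two bounds separately, using the ring analogue of Lemma \ref{groupextlemma} that the excerpt already records. For the lower bound $\log n \preceq F_{\mathcal{O}_L}(n)$, I would invoke the ring inclusion $\mathbb{Z} \hookrightarrow \mathcal{O}_L$: any ring homomorphism $\psi : \mathcal{O}_L \to Q$ restricts to a ring homomorphism $\mathbb{Z} \to \psi(\mathbb{Z}) \subseteq Q$, so $k_{\mathbb{Z}}(m) \leq k_{\mathcal{O}_L}(m)$ for every $m \in \mathbb{Z}$. Since $1 \in \mathbb{Z}$ has bounded word length in $\mathcal{O}_L$ with respect to any fixed $\mathbb{Z}$-module generating set, this yields $F_{\mathbb{Z}}(n) \preceq F_{\mathcal{O}_L}(n)$, and Theorem \ref{Zcase} finishes this direction.

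For the upper bound $F_{\mathcal{O}_L}(n) \preceq \log n$, the idea is to use completely split primes to produce residue fields of size $p$ as ring quotients of $\mathcal{O}_L$. Fix a $\mathbb{Z}$-basis $\omega_1, \ldots, \omega_d$ of $\mathcal{O}_L$, where $d = [L:\mathbb{Q}]$, and let $\alpha \in \mathcal{O}_L$ be nonzero with $\|\alpha\| \leq n$. Writing $\alpha = \sum c_i \omega_i$ with $\sum |c_i| \leq n$, the norm $N_{L/\mathbb{Q}}(\alpha)$ is the determinant of multiplication by $\alpha$ in this basis, i.e.\ a polynomial of degree $d$ in the $c_i$ with bounded coefficients. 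Hence $1 \leq |N_{L/\mathbb{Q}}(\alpha)| \leq A n^d$ for a constant $A$ depending only on $\mathcal{O}_L$.

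The next step combines Cebotar\"ev with the prime number theorem. Let $S$ denote the set of rational primes that split completely in $\mathcal{O}_L$; Cebotar\"ev gives $S$ positive Dirichlet density $\delta > 0$, which together with PNT yields the Chebyshev-type asymptotic $\sum_{p \in S,\, p \leq x} \log p \sim \delta x$. Choose $x = K \log n$ with $K$ large enough that $\delta x / 2$ exceeds $\log(A n^d) = d \log n + \log A$. If every split prime $p \leq x$ divided $N_{L/\mathbb{Q}}(\alpha)$, then $\log|N_{L/\mathbb{Q}}(\alpha)| \geq \sum_{p \in S,\, p \leq x} \log p \geq \delta x / 2$, contradicting the bound on $|N_{L/\mathbb{Q}}(\alpha)|$. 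Hence there is a split prime $p \leq K \log n$ with $p \nmid N_{L/\mathbb{Q}}(\alpha)$. Writing $p \mathcal{O}_L = \mathfrak{p}_1 \cdots \mathfrak{p}_d$, we have $\mathcal{O}_L/\mathfrak{p}_i \cong \mathbb{F}_p$ for each $i$, and $\alpha \notin \mathfrak{p}_i$ for at least one $i$ (otherwise $N(\mathfrak{p}_i) = p$ would divide $|N_{L/\mathbb{Q}}(\alpha)|$). Reducing modulo such a $\mathfrak{p}_i$ gives a ring quotient of size $p$ in which $\alpha$ survives, so $k_{\mathcal{O}_L}(\alpha) \leq p \leq K \log n$.

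The main obstacle is the quantitative step: promoting the qualitative positive density of split primes into a split prime of size $O(\log n)$ that avoids the at most $O(\log n / \log \log n)$ prime divisors of $N_{L/\mathbb{Q}}(\alpha)$. Comparing $\log$-sums (the Chebyshev-type asymptotic) rather than raw counts of split primes is what keeps the final bound at $O(\log n)$, instead of a spurious $O(\log n \cdot \log\log n)$ that a naive application of Cebotar\"ev's density alone would produce.
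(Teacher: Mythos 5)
Your proposal is correct and follows essentially the same strategy as the paper: the lower bound via the ring analogue of Lemma \ref{groupextlemma} applied to $\mathbb{Z} \subseteq \mathcal{O}_L$ together with Theorem \ref{Zcase}, and the upper bound by combining Cebotar\"ev's density theorem with the prime number theorem to produce a completely split prime of size $O(\log n)$ avoiding the prime divisors of a nonzero integer of size polynomial in $n$ attached to $\alpha$. The only (harmless) difference is that you apply this to the norm $N_{L/\mathbb{Q}}(\alpha)$, whereas the paper applies it to a single nonzero coordinate $a_k$ of $\alpha$ in an integral basis and then observes that $a_k \not\equiv 0 \bmod q$ forces $\alpha$ to survive in $\mathcal{O}_L/\mathfrak{q}_i$ for some prime $\mathfrak{q}_i$ above $q$.
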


\begin{proof}
The lower bound, $F_{\mathcal{O}_L}(n) \succeq \log(n)$, follows from Lemma \ref{groupextlemma} for rings and Theorem \ref{Zcase}.

For the upper bound, the main idea is that we will first use the bound for $\mathbb{Z}$ to ensure that one of the coordinates in an integral basis for $\mathcal{O}_L$ will not vanish in a small quotient, then we will use the Cebotar\"ev Density Theorem find an even smaller quotient where the element does not vanish.
Let $S = \{b_1, \ldots, b_k \}$ be an integral basis for $\mathcal{O}_L$, and fix a nontrivial $g$ in $\mathcal{O}_L$ with $\|g \|_S = n$.
Then $g = \sum_{i=1}^n a_i b_i$ where $a_i \in \mathbb{Z}$ and $|a_i| \leq n$.
Since $g\neq 0$ there exists  $k$ such that $a_k \neq 0$.
By the Cebotar\"ev Density Theorem, the natural density of the set $P$ of all primes in $\mathbb{Z}$ that split over $\mathcal{O}_L$ has nonzero natural density in the set of all primes in $\mathbb{Z}$.
We claim that there exists $C > 0$, which does not depend on $n$, and a prime $q$ such that $(q)$ splits over $\mathcal{O}_L$ and $q \leq C \log(n)$ and $a_k \not\equiv 0 \mod q$.
Indeed, enumerate $P = \{ q_1, q_2, \ldots  \}$.  
Let $q_{r+1}$ be the first prime in $P$ such that $a_k \not \equiv 0 \mod q_{r+1}$.
Then $q_1 \cdots q_{r}$ divides $a_k$ and by the prime number theorem and positive density of $P$, we have that $q_{r+1} \leq M r \log(r)$ for some $M > 0$, depending only on $L$.
A simple calculation shows that there exists $M'>0$ such that $q_1 \cdots q_r \geq e^{M' r\log(r)}$.
Hence, $q_{r+1} \leq C \log(a_k)$, where $C > 0$ depends only on $L$. The claim is shown.

Hence, we have that $(q) = \mathfrak{q}_1 \cdots \mathfrak{q}_{c}$ with
$|\mathcal{O}_L/\mathfrak{q}_i| = q.$
Further, since $q$ does not divide $a_k$ and since the integral basis $S$ gets sent to a basis in $\mathcal{O}_L/(q)$, we have that $g \neq 1$ in $\mathcal{O}_L/(q)$.
Hence, there exists one $\mathfrak{q}_i$ with $g \neq 1$ in $\mathcal{O}_L/\mathfrak{q}_i$.
As the cardinality of $\mathcal{O}_L/\mathfrak{q}_i$ is equal to $q$ which is no greater than $C \log(n)$, we have the desired upper bound. 
\end{proof}

\subsection{Proof of Theorem \ref{lineargrouptheorem}}

Let $L/\mathbb{Q}$ be a finite extension, and let $\mathcal{O}_L$ be the ring of integers.
With the results of the previous section, we can now obtain results for $\SL_k( \mathcal{O}_L)$.
Note that $\SL_k(\mathcal{O}_L)$ is finitely generated, but this fact is nontrivial (see Platonov and Rapinchuk \cite{platonov}, Chapter 4). 

\begin{theorem}\label{upperslbound} If $k \geq 2$, we have $F_{\SL_k(\mathcal{O}_L)}(n) \preceq n^{k^2-1}$.
\end{theorem}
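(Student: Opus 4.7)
The plan is analogous to the strategy from the proof of Theorem \ref{OLbound}, but lifted to the matrix setting. For a nontrivial $g\in\SL_k(\mathcal{O}_L)$ with $\|g\|\leq n$, I want to produce a prime ideal $\mathfrak{q}$ of $\mathcal{O}_L$ whose residue field has size $\preceq n$ and such that the induced congruence map $\SL_k(\mathcal{O}_L)\to\SL_k(\mathcal{O}_L/\mathfrak{q})\cong\SL_k(\mathbb{F}_q)$ sends $g$ to a non-identity matrix. Combined with the standard count $|\SL_k(\mathbb{F}_q)|\leq q^{k^2-1}$, this yields a finite quotient of size $\preceq n^{k^2-1}$ in which $g$ is nontrivial, as required.

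The construction of $\mathfrak{q}$ proceeds by first estimating how large the entries of $g$ can be. Fix a finite generating set $S$ of $\SL_k(\mathcal{O}_L)$ and an integral basis $\{b_1,\dots,b_d\}$ of $\mathcal{O}_L$, so that the word length on $\mathcal{O}_L$ is $\|\sum a_i b_i\|=\sum|a_i|$. Because the multiplication table of $\mathcal{O}_L$ in this basis has bounded structure constants, the word length of a product in $\mathcal{O}_L$ is bounded by a fixed constant times the product of the word lengths of the factors. A routine induction on the number of matrix generators used in a product then shows that every entry of $g$ has word length at most $C^n$ in $\mathcal{O}_L$, where $C$ depends only on $k$, $S$, and the structure constants of $\mathcal{O}_L$.

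Since $g\ne I_k$, some entry $a$ of the matrix $g-I_k$ is nonzero, with $\|a\|\leq C^n$. Applying the splitting-prime construction from the proof of Theorem \ref{OLbound} to the single element $a\in\mathcal{O}_L$, I obtain a rational prime $q$ that splits completely in $\mathcal{O}_L$, a prime $\mathfrak{q}\mid(q)$ with $a\not\equiv 0\pmod{\mathfrak{q}}$, and the bound $q=|\mathcal{O}_L/\mathfrak{q}|\preceq\log(C^n)\preceq n$ with constants depending only on $L$. The reduction map $\SL_k(\mathcal{O}_L)\to\SL_k(\mathbb{F}_q)$ sends $g-I_k$ to a matrix whose corresponding entry is still $a\bmod\mathfrak{q}\ne 0$, so $g$ has nontrivial image in a group of order at most $q^{k^2-1}\preceq n^{k^2-1}$.

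The only genuinely delicate point is the entry-growth estimate in the middle paragraph: one must verify carefully that, with respect to the additive word metric on $\mathcal{O}_L$ (rather than an absolute value, which is unavailable for a general number field), the entries of a matrix of word length $n$ in $\SL_k(\mathcal{O}_L)$ blow up at most exponentially in $n$. This is elementary once the bounded multiplication table is in hand, and it interacts cleanly with the logarithmic savings from Theorem \ref{OLbound} to yield the stated polynomial bound.
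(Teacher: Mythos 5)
Your proposal is correct and follows essentially the same route as the paper: bound the entries of a word of length $n$ exponentially, locate a nonzero entry of $g-I_k$, use the logarithmic quotient bound for $\mathcal{O}_L$ (Theorem \ref{OLbound}) to find a residue ring of size $\preceq n$ detecting that entry, and then count $|\SL_k|$ of the quotient. The only cosmetic difference is that you unfold the split-prime argument and reduce modulo a prime $\mathfrak{q}$ of $\mathcal{O}_L$ directly, whereas the paper just invokes Theorem \ref{OLbound} to get a map onto some $\mathbb{Z}/d\mathbb{Z}$ and passes to $\SL_k(\mathbb{Z}/d\mathbb{Z})$.
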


\begin{proof} 
The strategy in this proof is to bound the entries of a word of length $n$ in $\SL_k( \mathcal{O}_L)$ and then to use this bound to approximate the group using the $F_{\mathcal{O}_L}$ result.
Let $A_1, \ldots, A_r$ be generators for $\SL_k( \mathcal{O}_L)$.
Let $S$ be an integral basis for $\mathcal{O}_L$.
Let $g \in \SL_k( \mathcal{O}_L)$ be a nontrivial element with word length less than or equal to $n$.
It is straightforward to see that there exists a $\lambda > 0$, depending only on $A_1, \ldots, A_r,$ and $S$, such that the following holds: there exists an off-diagonal nonzero entry, $a \neq 0$, or a diagonal entry $a \neq 1$ of the matrix $g$ that has $\|a\|_S \leq \lambda^n$.
For simplicity we assume that $a$ is an off-diagonal entry, a similar argument to what we will give works otherwise.
Since $a$ is in $\mathcal{O}_L$, Theorem \ref{OLbound} gives a $D> 0$, depending only on $L$ and $S$, and a ring homomorphism $\psi : \mathcal{O}_L \to \mathbb{Z}/d\mathbb{Z}$ where $d < D (\log(\lambda^n))$ such that $a \notin \ker \psi$.
The function $\psi$ induces a map $\psi' : \SL_k( \mathcal{O}_L) \to \SL_k( \mathbb{Z}/d \mathbb{Z})$ where $g \notin \ker \psi'$.
By our bound for $d$ we have
$$d^{k^2-1} \leq D^{k^2-1}( \log(\lambda^n))^{k^2-1} \leq D^{k^2-1}( \log(\lambda))^{k^2-1} n^{k^2-1},$$
giving $F_{\SL_k( \mathcal{O}_L)} (n) \preceq n^{k^2-1}$ as asserted.
\end{proof}

For the next theorem we need to introduce some definitions concerning certain subgroups of $\SL_k(\mathbb{Z}/n\mathbb{Z})$.
A normal subgroup of $\SL_k( \mathbb{Z}/n\mathbb{Z})$ is said to be a \emph{principal congruence subgroup} if it is the kernel of some map $\varphi: \SL_k( \mathbb{Z}/n\mathbb{Z}) \to \SL_k( \mathbb{Z}/d\mathbb{Z})$ induced from the natural ring homomorphism $\mathbb{Z}/n \mathbb{Z} \to \mathbb{Z}/d \mathbb{Z}$, where $d$ properly divides $n$.
A subgroup in $\SL_k( \mathbb{Z}/n\mathbb{Z})$ which contains some principal congruence subgroup is said to be a \emph{congruence subgroup}.
A subgroup in $\SL_k( \mathbb{Z}/n\mathbb{Z})$ which does not contain any principal congruence subgroups is said to be \emph{essential}.

\begin{theorem} \label{lowerslbound}
If $k > 2$, then $F_{\SL_k(\mathcal{O}_L)} (n) \succeq n$.
\end{theorem}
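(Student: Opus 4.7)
The plan is to exhibit, for each $n$, an element $g_n \in \SL_k(\mathcal{O}_L)$ with word length $\|g_n\|_S \leq n$ whose minimal distinguishing finite quotient has order at least linear in $n$. I propose taking $g_n := E_{12}(\psi(r))$, where $\psi(r) = \mathrm{lcm}(1,2,\ldots,r)$ as in Section 2.1 and $r$ is of order $\sqrt{n}$. The key to keeping the word length small is the Steinberg commutator identity $[E_{1\ell}(a), E_{\ell 2}(b)] = E_{12}(ab)$, available for any auxiliary index $\ell \notin \{1,2\}$ and requiring precisely $k > 2$. Setting $a = b = m$ gives the doubling recursion $\|E_{12}(m^2)\|_S \leq 2\|E_{1\ell}(m)\|_S + 2\|E_{\ell 2}(m)\|_S$, and iterating it $O(\log \log m)$ times down to the base case $\|E_{ij}(1)\|_S = O(1)$ yields $\|E_{12}(m)\|_S = O((\log m)^2)$. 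Since $\log \psi(r) \asymp r$ by the prime number theorem, one obtains $\|g_n\|_S = O(r^2)$, so $r \asymp \sqrt{n}$ is admissible.

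Now let $\pi \colon \SL_k(\mathcal{O}_L) \to Q$ be any finite quotient with $\pi(g_n) \neq 1$. By the congruence subgroup property for $\SL_k(\mathcal{O}_L)$ when $k > 2$ (Bass--Milnor--Serre over $\mathbb{Z}$, extended via the Strong Approximation Theorem to general $\mathcal{O}_L$), $\ker \pi$ contains some principal congruence subgroup $\Gamma(I)$. Let $M$ be the order of $\pi(E_{12}(1))$ in $Q$; the nontriviality of $\pi(g_n) = \pi(E_{12}(1))^{\psi(r)}$ forces $M \nmid \psi(r)$, and since $\psi(r)$ is divisible by every integer in $[1,r]$, we conclude $M \geq r+1$.

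To strengthen the crude $|Q| \geq M$ to $|Q| \gtrsim M^{k^2-1}$: all the $E_{ij}(1)$ are conjugate in $\SL_k(\mathbb{Z})$ to $E_{12}(1)$ via signed permutation matrices, so each $\pi(E_{ij}(1))$ also has order $M$. Hence $\pi(\SL_k(\mathbb{Z}))$ is a quotient of $\SL_k(\mathbb{Z})$ modulo the normal closure of $\{E_{ij}(M) : i \neq j\}$. For $k > 2$ that normal closure coincides with $\Gamma(M)$ (a classical consequence of bounded generation by elementary matrices together with CSP), so $\pi(\SL_k(\mathbb{Z}))$ factors through $\SL_k(\mathbb{Z}/M\mathbb{Z})$; since the image of $E_{12}(1)$ there retains order $M$, the only compatible further normal subgroups to quotient by are central, yielding $|\pi(\SL_k(\mathbb{Z}))| \gtrsim M^{k^2-1}$ and a fortiori $|Q| \gtrsim M^{k^2-1}$.

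Combining the two estimates gives $k_{\SL_k(\mathcal{O}_L)}(g_n) \gtrsim (r+1)^{k^2-1} \gtrsim n^{(k^2-1)/2}$, and since $(k^2-1)/2 \geq 4$ whenever $k \geq 3$, this is well in excess of the claimed $F_{\SL_k(\mathcal{O}_L)}(n) \succeq n$. The main obstacle is the structural bound $|Q| \gtrsim M^{k^2-1}$: it rests on identifying the normal closure of $\{E_{ij}(M)\}$ in $\SL_k(\mathbb{Z})$ with $\Gamma(M)$, which is precisely where the hypothesis $k > 2$ is essential and is the deep input from the congruence subgroup property. By contrast, the word-length estimate in the first paragraph is a routine iteration of the Steinberg commutator relation.
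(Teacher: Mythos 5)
Your argument is correct in outline and in fact proves more than the theorem asserts, and at the top level it follows the same strategy as the paper: both use the elementary unipotent $E_{ij}(\psi(r))$, $\psi(r)=\mathrm{lcm}(1,\ldots,r)$, as the witness, and both use congruence-subgroup-type rigidity to force any distinguishing quotient to be essentially a congruence quotient of large level. The two technical inputs, however, are implemented differently. For the word length, the paper cites Lubotzky--Mozes--Raghunathan to get $\|E_{ij}(\psi(n))\|_S \preceq \log \psi(n) \simeq n$, which is sharp; your Steinberg-commutator recursion is self-contained, though for a general integer $m$ (binary expansion plus repeated squaring) it yields $O((\log m)^3)$ rather than the $O((\log m)^2)$ you state --- harmless, since any polylogarithmic bound suffices. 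For the quotient size, the paper runs the Bass--Lazard--Serre commutative diagram and then disposes of essential quotients of $\SL_k(\mathbb{Z}/d\mathbb{Z})$ via Lubotzky--Segal, settling for $|Q| \succeq n$; you instead extract the order $M>r$ of $\pi(E_{12}(1))$ and invoke Mennicke's theorem that for $k\geq 3$ the normal closure of $E_{12}(M)$ in $\SL_k(\mathbb{Z})$ is the full principal congruence subgroup of level $M$, which gives $|Q| \succeq M^{k^2-1}$ up to the central factor and hence the strictly stronger conclusion $F_{\SL_k(\mathcal{O}_L)}(n) \succeq n^{(k^2-1)/3}$. This quantitative improvement (pushed further, with the sharp word-length bound, to exponent $k^2-1$) is genuine and is the route taken in later work on this problem.

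Two caveats, neither fatal. First, the step ``the only compatible further normal subgroups are central'' requires the sandwich classification of normal subgroups of $\SL_k(\mathbb{Z}/M\mathbb{Z})$ for $k \geq 3$ (Klingenberg/Bass), and the center there can have order as large as $k^{\omega(M)}$, which is $M^{o(1)}$ rather than $O(1)$; you still get $|Q| \geq M^{k^2-1-o(1)}$, which is more than enough. Second, your opening appeal to the congruence subgroup property of $\SL_k(\mathcal{O}_L)$ is never actually used (and is slightly delicate for totally imaginary $L$, where the congruence kernel is $\mu(L)$ rather than trivial); your argument only ever examines the image of the subgroup $\SL_k(\mathbb{Z})$, which is the same reduction the paper performs via Lemma \ref{groupextlemma}, so you should simply delete that sentence.
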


\begin{proof}
We first pick a candidate for the lower bound.
By Lubotzky-Mozes-Raghunathan \cite{LMR}, Theorem A, there exists a finite generating set, $S$, for $\SL_k( \mathbb{Z})$ (see also Riley \cite{riley-2004}) and a $C > 0$ satisfying
$$
\| - \|_S \leq C \log(\| - \|_1),
$$
where $\| - \|_1$ is the $1$-operator norm for matrices.
Thus, as $\log(\| E_{ij} (\psi(n)) \|_1) \simeq \log(\psi(n)) \simeq n$, the elementary matrix $E_{ij}(\psi(n))$
may be written in terms of at most $C n$ elements from $S$.
This elementary matrix is our candidate.

Now we show that our candidate, $E_{ij}(\psi(n))$, takes on the lower bound.
Suppose that $Q$ is the finite quotient with the smallest cardinality such that $E_{ij}(\psi(n))$ does not vanish.
Since $\SL_k( \mathbb{Z})$ has the congruence subgroup property (see Bass-Lazard-Serre \cite{serre}), then for the map $\delta: \SL_k( \mathbb{Z}) \to Q$, we have an integer $d$ such that the following diagram commutes.
$$\xymatrix{
\SL_k( \mathbb{Z}) \ar[dr]^{} \ar[r]^{\delta} & Q  \\
& \SL_k( \mathbb{Z}/d \mathbb{Z}) \ar[u]^{}
}$$
Hence, by our choice of $E_{ij}(\psi(n))$, we have that $Q = \SL_k( \mathbb{Z}/d \mathbb{Z})/N$ where $d \geq n$.
In the case where $N$ is a principal congruence subgroup, we see that the smallest finite quotient of $\SL_k( \mathbb{Z})$ where $E_{ij}(\psi(n))$ is nontrivial has size greater than $|\SL_k( \mathbb{Z}/ n \mathbb{Z})| \succeq n$.
If $N$ is a congruence group, then taking the quotient by the largest principal congruence group $N$ contains reduces to the case of $N$ being an essential group.
Then by Proposition 6.1.1 in Lubotzky and Segal \cite{lubsegal-2003}, we have that there exists a $c > 0$, depending only on $k$, such that $|Q| \geq c n$.
Since $\SL_k( \mathbb{Z})$ is contained in $\SL_k( \mathcal{O}_L)$, Lemma \ref{groupextlemma} gives the claim.
\end{proof}

\section{Proof of Theorem \ref{nilpotentgrouptheorem}}

In the proof of Theorem \ref{nilpotentgrouptheorem}, we require the following lemma.

\begin{lemma} \label{uppertricase}
Let $U$ be the group of $d \times d$ integral upper triangular unipotent matrices.
If $G \leq U$, then $F_G(n)  \leq C \log(n)^{h(G)}$, where $C$ does not depend on $n$.
\end{lemma}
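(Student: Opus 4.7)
My plan is to construct, for each nontrivial $g \in G$ of word length at most $n$, a finite quotient of $G$ of size at most $C \log(n)^{h(G)}$ in which $g$ remains nontrivial. The quotient will arise by reducing modulo a carefully chosen small prime $p$.

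Since $G \leq U$ and $U$ is polycyclic and torsion-free, $G$ is itself finitely generated, nilpotent, and torsion-free; fix a finite generating set $S$. A standard polynomial-growth argument on entries of unipotent matrix products shows that every entry of $g$ with $\|g\|_S \leq n$ is bounded in absolute value by $C_1 n^{d-1}$; concretely, one writes each generator as $I + Y$ with $Y$ strictly upper triangular and controls the depth-$k$ part of the product of $n$ such terms by $O(n^k)$. For nontrivial $g$, pick a nonzero entry $a$ of $g - I$, so $0 < |a| \leq C_1 n^{d-1}$. Applying Theorem \ref{Zcase} to the integer $a$, there is a prime $p$ with $p \nmid a$ and $p \leq C_2 \log(|a|) \leq C_3 \log(n)$; after discarding the finitely many primes less than $d$, I may further assume $p \geq d$. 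Then the reduction $\pi_p : U(\mathbb{Z}) \to U(\mathbb{Z}/p\mathbb{Z})$ sends $g$ to a nontrivial element, so $\bar G := \pi_p(G)$ is a finite quotient of $G$ that separates $g$ from the identity.

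The main obstacle is to establish the sharp size bound $|\bar G| \leq p^{h(G)}$, rather than the trivial $|U(\mathbb{Z}/p\mathbb{Z})| = p^{d(d-1)/2}$. To obtain it I would invoke Malcev's theorem: $G$ admits a central series $G = G_1 \supseteq \cdots \supseteq G_{h+1} = 1$ with each factor $G_i / G_{i+1} \cong \mathbb{Z}$, where $h = h(G)$. Pick lifts $x_i \in G_i$ of the generators of these factors and consider the induced filtration $\bar G_i := \pi_p(G_i)$; each quotient $\bar G_i / \bar G_{i+1}$ is cyclic, generated by $\bar x_i \bar G_{i+1}$. The crucial algebraic input is that $U_d(\mathbb{F}_p)$ has exponent $p$ whenever $p \geq d$: writing any element as $I + N$ with $N$ strictly upper triangular, $(I+N)^p = \sum_{k=0}^p \binom{p}{k} N^k = I + N^p = I$ in characteristic $p$, since the middle binomial coefficients $\binom{p}{k}$ vanish modulo $p$ and $N^p = 0$ because $p \geq d$. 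Hence each $\bar x_i$ has order dividing $p$, each factor $\bar G_i / \bar G_{i+1}$ has order at most $p$, and $|\bar G| \leq p^h$.

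Putting everything together, $k_G(g) \leq |\bar G| \leq p^{h(G)} \leq (C_3 \log n)^{h(G)}$, and taking the supremum over $g$ with $\|g\|_S \leq n$ yields $F_G(n) \leq C \log(n)^{h(G)}$ after absorbing constants into $C$. The entire argument hinges on pairing the Malcev central series of $G$ with the exponent-$p$ property of $U_d(\mathbb{F}_p)$, which is precisely what upgrades the crude exponent $\dim U = d(d-1)/2$ to the Hirsch number $h(G)$.
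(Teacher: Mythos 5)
Your proof is correct and follows essentially the same route as the paper's: bound the matrix entries polynomially in $n$, use the $\mathbb{Z}$-case to find a prime $p \preceq \log(n)$ detecting a nonzero entry, and exploit the exponent-$p$ property of the unipotent group mod $p$ together with a central series to bound the quotient by $p^{h(G)}$. You have merely filled in details (the binomial computation and the passage from the Mal'cev series to $|\bar G| \leq p^{h}$) that the paper's proof leaves implicit.
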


\begin{proof}
It is easy to see that entries of matrices of word length $n$ in $G$ are bounded by $C n^r$ for some fixed $r$.
Take $g \in G$.
Then Theorem \ref{Zcase} gives some $D > 0$, which does not depend on $n$, such that $p \leq D \log(n)$ and the natural map $\psi: U \to U_p$ has $g \notin \ker \psi$, where $U_p$ is the image of $U$ in $\GL_d( \mathbb{Z}/p\mathbb{Z})$ consisting of unipotent upper triangular matrices.
So long as $p$ is greater than $d$, we have that $U_p$ has exponent $p$.
Thus $|G| \leq p^{h(G)}$, giving $|G| \leq D^{h(G)} \log(n)^{h(G)}$.
Setting $C = D^{h(G)}$ finishes the proof.
\end{proof}

In the following proof we will reduce the general case to the case in the previous lemma.

\begin{proof} [Proof of Theorem \ref{nilpotentgrouptheorem}]
To start, we may assume, without loss of generality, that $G$ is a torsion-free, finitely generated nilpotent group.
By Mal'cev's Theorem (See Segal \cite{Segal}, Chapter 5, \S B, Theorem 2, or Hall \cite{edmonton:hall}, p. 56, Theorem 7.5)
there exists a canonical injective homomorphism $\beta_N : G \to U$, where $U$ is a group of $d \times d$ integral upper triangular unipotent matrices.
Hence, the bound given by Lemma \ref{uppertricase} finishes the proof.
\end{proof}

\section{The first Grigorchuk group} \label{grigsection}

Let $T$ be the collection of finite sequences of 1s and 0s of length $n \geq 0$.
We will be interested in the automorphisms of $T$ defined inductively by:
\begin{eqnarray*}
a(\xi_1, \ldots, \xi_n) &=& (\overline{\xi_1}, \xi_2, \ldots, \xi_n)  \\
b(0, \xi_1, \ldots, \xi_n) &=&  (0, \overline{\xi_1}, \ldots, \xi_n)) \\
b(1, \xi_1, \ldots, \xi_n) &=&  (1, c(\xi_1, \ldots, \xi_n)) \\
c(0, \xi_1, \ldots, \xi_n) &=&  (0, \overline{\xi_1}, \ldots, \xi_n) \\
c(1, \xi_1, \ldots, \xi_n) &=&  (1, d(\xi_1, \ldots, \xi_n)) \\
d(0, \xi_1, \ldots, \xi_n) &=&  (0, \xi_1, \ldots, \xi_n) \\
d(1, \xi_1, \ldots, \xi_n) &=&  (1, b(\xi_1, \ldots, \xi_n)) \\
\end{eqnarray*}
where $\overline{1} = 0$ and $\overline{0} = 1$.
Let the \emph{first Grigorchuk Group} be $\Gamma:=\left< a, b, c, d \right>$ as in Grigorchuk \cite{Grigo-80} (see also de la Harpe \cite{harpe-2000}, Chapter VIII).
In the case when $g \in \Aut(T)$ fixes the first $k$ entries of any element in $T$, we will write
$g = (\gamma_1, \ldots, \gamma_{2^k})_k$ in order to record the action beyond level $k$ only.
In this case, we say that $g$ has \emph{level $k$}.
For example, $b = (a, c)_1$, $c = (a, d)_1$, and $d = (1, b)_1$ all have level 1.

Let $T(k)$ be the collection of sequences of length at most $k$.
The truncation $T \to T(k)$ induces a map $\psi_k : \Gamma \to \Aut(T(k))$; a \emph{principal congruence subgroup} is equal to $\ker \psi_k$ for some $k$.
Let $\Gamma_k$ be the image of $\psi_k$ in $\Aut(T(k))$.
We borrow from de la Harpe \cite{harpe-2000}, page 238:

\begin{lemma} \label{cong}
For $k \geq 3$, $|\Gamma_k| = 2^{5 \cdot 2^{k-3} + 2}$.
\end{lemma}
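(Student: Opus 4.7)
The plan is to induct on $k$, using the self-similar structure of $\Gamma$ through the level-$1$ stabilizer $H := \mathrm{St}_\Gamma(1)$. Since $a$ swaps the two subtrees below the root while $b$, $c$, $d$ each fix level $1$, we have $[\Gamma : H] = 2$, and hence $|\Gamma_k| = 2\,|H_k|$ with $H_k := \psi_k(H)$. The self-similarity gives the injective decomposition homomorphism $\phi : H \to \Gamma \times \Gamma$ sending $g$ to its pair of subtree restrictions $(g|_0, g|_1)$, and $\phi$ descends to an injection $\phi_{k+1} : H_{k+1} \hookrightarrow \Gamma_k \times \Gamma_k$. So the whole problem reduces to computing the index $[\Gamma_k \times \Gamma_k : \phi_{k+1}(H_{k+1})]$.

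The base case $k=3$ I would handle by a direct finite computation: the action of $\Gamma$ on $T(3)$ realizes the full automorphism group $\Aut(T(3))$ of order $2^{2^3-1} = 2^7$, which matches $2^{5\cdot 2^0 + 2}$. For the inductive step, the target is the recurrence
\[
|\Gamma_{k+1}| = \tfrac{1}{4}\,|\Gamma_k|^2 \qquad (k \ge 3),
\]
which, combined with $|\Gamma_{k+1}| = 2|H_{k+1}|$ and the injectivity of $\phi_{k+1}$, is equivalent to the statement that $\phi_{k+1}(H_{k+1})$ has index exactly $8$ inside $\Gamma_k \times \Gamma_k$. Granting this, setting $a_k := \log_2|\Gamma_k|$ gives $a_{k+1} = 2a_k - 2$ with $a_3 = 7$, so $a_k - 2 = 2^{k-3}(a_3 - 2) = 5 \cdot 2^{k-3}$, which yields the claimed formula.

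The index-$8$ claim is the main obstacle, and I would attack it via the six canonical generators of $H$ with their decompositions
\[
b=(a,c),\ c=(a,d),\ d=(1,b),\ aba=(c,a),\ aca=(d,a),\ ada=(b,1).
\]
Using $\Gamma^{\mathrm{ab}} \cong (\mathbb{Z}/2)^3$ (with $\bar d = \bar b + \bar c$), a direct linear-algebra check shows that the six images span a $4$-dimensional subspace of $(\mathbb{Z}/2)^6 \cong (\Gamma \times \Gamma)^{\mathrm{ab}}$, giving an index-$4$ obstruction at the abelian level. The extra factor of $2$ must come from a genuinely non-abelian constraint: I expect to identify it as a $\mathbb{Z}/2$-valued invariant of $\Gamma_k \times \Gamma_k$ that persists at every finite quotient, arising from the interaction of the relation $bcd=1$ with the level-$1$ decomposition and controlled by the rigid stabilizers in $\Gamma$. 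Verifying that exactly this obstruction (and no further ones) appears at each finite level, as in de la Harpe's treatment, closes the induction.
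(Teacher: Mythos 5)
The paper does not actually prove this lemma; it is quoted from de la Harpe, p.~238, and your strategy is precisely the standard one behind that reference. Your skeleton is sound: the base case $\Gamma_3=\Aut(T(3))$ of order $2^7$, the reduction via $|\Gamma_{k+1}|=2|H_{k+1}|$ and the injection $\phi_{k+1}:H_{k+1}\hookrightarrow\Gamma_k\times\Gamma_k$, the recurrence $a_{k+1}=2a_k-2$, and the abelianization computation showing the six generators span a $4$-dimensional subspace of $(\mathbb{Z}/2)^6$ are all correct. But the decisive step --- that $\phi_{k+1}(H_{k+1})$ has index \emph{exactly} $8$ in $\Gamma_k\times\Gamma_k$ for every $k\ge 3$ --- is left as ``I expect to identify it as a $\mathbb{Z}/2$-valued invariant\dots,'' which is a genuine gap, not a proof. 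Note that both inequalities need work: you must show the index is at most $8$ (your argument so far only bounds it below by $4$), and you must show the remaining factor of $2$ is really there and survives in every finite quotient $\Gamma_k\times\Gamma_k$. The persistence issue is not cosmetic: for $k=2$ the map $\phi_3:H_3\to\Gamma_2\times\Gamma_2$ is an isomorphism (both sides have order $64$), so the index collapses from $8$ to $1$ at that level, and nothing in your sketch explains why no further collapse occurs for $k\ge 3$.

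The standard way to close this is to work modulo $B:=\langle b\rangle^{\Gamma}$. One checks that both projections of $\phi(\mathrm{St}_\Gamma(1))$ onto $\Gamma$ are surjective and that $\phi(d)=(1,b)$, $\phi(ada)=(b,1)$, whence $\phi(\mathrm{St}_\Gamma(1))\supseteq B\times B$; that $\Gamma/B\cong D_4$ has order $8$ (using $d\equiv c$ mod $B$ and the relation $(ad)^4=1$); and that the image of $\phi(\mathrm{St}_\Gamma(1))$ in $(\Gamma/B)\times(\Gamma/B)$ is the order-$8$ dihedral subgroup $\langle(\bar a,\bar d),(\bar d,\bar a)\rangle$, which has index $8$ in the order-$64$ product. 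This gives both the upper bound and identifies your missing factor of $2$ concretely (it is visible already in $D_4\times D_4$, not only via ``rigid stabilizers''). Finally, to transfer the exact index $8$ to level $k\ge3$ one verifies that $B\supseteq\mathrm{St}_\Gamma(k)$ for the relevant $k$ (so that $\Gamma_k/\psi_k(B)\cong D_4$ and the order-$8$ image persists), or equivalently anchors the count with the explicit value $|\Gamma_4|=2^{12}$. With those pieces supplied your induction is complete; without them the key claim is asserted rather than proved.
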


\begin{theorem}
We have $F_\Gamma(n) \preceq 2^n$. \label{grigupperbound}
\end{theorem}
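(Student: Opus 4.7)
The approach is to exhibit, for each nontrivial $g \in \Gamma$, an explicit quotient---one of the congruence quotients $\Gamma_k := \Gamma/\ker \psi_k$ from the tree action---in which $g$ remains nontrivial. By Lemma \ref{cong}, $|\Gamma_k| = 2^{5\cdot 2^{k-3}+2}$ for $k \geq 3$, so $\log_2|\Gamma_k|$ is essentially $2^k$. Define $\omega(g)$ to be the smallest $k$ with $\psi_k(g) \neq 1$; then $k_\Gamma(g) \leq |\Gamma_{\omega(g)}|$, and the theorem will follow once I establish that $2^{\omega(g)} \leq C\|g\|$ for an absolute constant $C$, i.e.\ that $g$ acts nontrivially on some level of depth at most $\log_2\|g\| + O(1)$.

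The heart of the proof is a \emph{depth--length inequality}: there is an absolute $c > 0$ such that every nontrivial element of $\ker \psi_k$ has word length at least $c\cdot 2^k$. I would prove this by induction on $k$, leveraging the self-similarity of $\Gamma$. The inclusion $\ker\psi_k \subseteq \ker\psi_1$ means any $g \in \ker\psi_k$ decomposes as $g = (g_0, g_1)_1$ with $g_0,g_1 \in \Gamma$, and the hypothesis that $g$ act trivially on level $k$ forces each section to lie in $\ker\psi_{k-1}$. Since $g$ is nontrivial, at least one section, say $g_0$, is nontrivial, so by induction $\|g_0\| \geq c\cdot 2^{k-1}$; the inductive step is then closed by a reverse-contraction bound of the form $\|g\| \geq 2\|g_0\| - O(1)$ for elements of $\ker\psi_1$.

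The main obstacle is establishing this reverse contraction. The standard Grigorchuk contraction estimate yields $\|g_0\|+\|g_1\| \leq \lambda\|g\| + C'$ with $\lambda<1$ but typically larger than $1/2$, so a naive inversion is insufficient: one must use the condition $g\in\ker\psi_1$ in a stronger way. The idea is that a geodesic word representing such a $g$, written in the alphabet $\{a,b,c,d\}$, must have its $a$-letters appear in matched pairs bracketing the generators $b,c,d$ (otherwise the two halves of $T$ would not be preserved), and so the geodesic splits into syllables contributing essentially disjointly to the two sections $g_0$ and $g_1$. A careful syllable-counting argument---or a direct appeal to Grigorchuk's length estimates---then yields $\|g\| \geq 2\|g_i\| - O(1)$. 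Granting this, the theorem follows: for $\|g\|\leq n$ we obtain $2^{\omega(g)} \leq n/c$, hence $k_\Gamma(g) \leq |\Gamma_{\omega(g)}| \leq 2^{Cn}$ for an absolute $C$, which is exactly $F_\Gamma(n) \preceq 2^n$.
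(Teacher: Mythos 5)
Your overall strategy is sound and is essentially the paper's argument run in the contrapositive: the paper pushes a word of length $n$ down the tree, showing that the portion of the word describing the action on some coordinate roughly halves at each level, so that $g$ is detected by $\psi_k$ for some $k \leq \log_2 n + O(1)$; you instead bound from below the word length of a nontrivial element of $\ker\psi_k$ by induction on $k$. Both are powered by the same self-similarity/halving phenomenon. Moreover, the step you single out as the main obstacle is not actually one: for $g \in \ker\psi_1$ given by a reduced word of length $\ell$, the letters alternate between $a$ and $\{b,c,d\}$, each $\{b,c,d\}$-letter contributes at most one letter to each section, and at most $(\ell+1)/2$ of the letters come from $\{b,c,d\}$; hence $\|g_i\| \leq (\|g\|+1)/2$, i.e.\ $\|g\| \geq 2\|g_i\| - 1$ for each section separately. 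This is the standard contraction estimate (the one used to prove $\Gamma$ is a $2$-group), so no new syllable-counting argument is needed.

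The genuine gap is in the base case of your induction, which you do not address. Writing $L_k$ for the minimal word length of a nontrivial element of $\ker\psi_k$, your inductive step gives $L_k \geq 2L_{k-1} - 1$, and the map $x \mapsto 2x-1$ has fixed point $x=1$. Since $b, c, d$ lie in $\ker\psi_1$ and $d$ lies in $\ker\psi_2$, all of length $1$, starting the induction at $k=1$ or $k=2$ yields only $L_k \geq 1$ for every $k$ --- no growth at all, and the depth--length inequality does not follow. You must anchor the induction at a level $k_0$ with $L_{k_0} \geq 2$; for instance $k_0 = 3$ works, since one checks directly that each of $a,b,c,d$ (all involutions) moves some vertex of level at most $3$. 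From $L_3 \geq 2$ and $L_k \geq 2L_{k-1}-1$ one gets $L_k \geq 2^{k-3}+1$, hence $2^{\omega(g)} = O(\|g\|)$ and $k_\Gamma(g) \leq |\Gamma_{\omega(g)}| = 2^{O(\|g\|)}$ by Lemma \ref{cong}, which is the theorem. The same caution applies if your reverse contraction were only available with a larger additive constant $C_0$: you would then need a base level with $L_{k_0} > C_0$, which exists because $\bigcap_k \ker\psi_k = 1$, but this has to be stated. With that repair your argument is complete.
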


\begin{proof}
Let $g$ be an element of word length $\leq n$.
We will show that there exists a $C > 0$, not depending on $g$, such that $k_\Gamma(g) \leq C 2^n$.
To this end, we claim that there exists $k \leq \log(n)$ such that at level $k$ there is an odd number of $a$ symbols appearing in some coordinate of $g$.
Hence $\psi_k(g) \neq 1$.
Suppose $g$ is in reduced word form.
Then the relations $cb = bc = d$ and $dc = cd = b$ give that $g$ must be in a form conjugate to
$g = a e_1 a e_2 a \cdots e_k r$ where $e_i \in \{ b, c, d, b^{-1}, c^{-1}, d^{-1} \}$ and $r \in \{ 1, a \}$.
If $r = a$, then we are done, as $\psi_1(g) \neq 1$.
Otherwise, we see that the number of the symbols describing $g$ on some coordinate of level $2$ is nonzero and no greater than $(|g| + 1)/2$.
And so, by induction, the number of symbols describing $g$ on some coordinate of level $k+1$ is nonzero and no greater than $(((|g| + 1)/2 + 1)/2 + \cdots + 1)/2
= |g| 2^{-k} + 2^{-1} + 2^{-2} + \cdots + 2^{-k} = 2^{-k} |g| + \left( 1 - 2^{-k}\right)$.
Hence, we see that there is some $k$ with $2^{-k} |g| \geq 1$, such that some coordinate of level $k+1$ has an odd number of $a$ symbols.
And so $\psi_{k+2}(g) \neq 1$ where $|g| \geq 2^{k}$, giving some $C > 0$ such that $k_\Gamma(g) \leq C 2^n$ by Lemma \ref{cong}.
\end{proof}

\begin{lemma} \label{elementexists}
There exists a $C > 0$ such that the element $(1, \ldots, 1, (ab)^2)_k$ is in $\Gamma$ and has word length less than $C 2^k$.
\end{lemma}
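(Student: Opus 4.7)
The plan is to exhibit a short explicit word representing this element, via an iterated substitution that pushes $(ab)^2$ one level deeper in the tree at each step.

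Define the substitution $\sigma$ on words over $\{a,b,c,d\}$ by $a \mapsto aca$, $b \mapsto d$, $c \mapsto b$, $d \mapsto c$, and set $w_k := \sigma^k((ab)^2)$. From the identities $b = (a,c)_1$, $c = (a,d)_1$, $d = (1,b)_1$ recorded at the start of Section~\ref{grigsection}, together with a one-line computation on $T$ giving $aca = (d,a)_1$, we see that for each generator $x \in \{a,b,c,d\}$ the element $\sigma(x) \in \Gamma$ has level $1$ with right component exactly $x$. Defining $L$ on words by $L(a) = d$, $L(b) = 1$, $L(c) = a$, $L(d) = a$ and extending by concatenation (so $L(u) \in \Gamma$ is well defined for every word $u$), we obtain the key identity $\sigma(u) = (L(u), u)_1$ in $\Gamma$ for every word $u$.

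The main claim is that $w_k$ represents $(1, \ldots, 1, (ab)^2)_k$ in $\Gamma$, proved by induction on $k$. The base case $k = 0$ is immediate. For the step, the identity above yields $w_{k+1} = (L(w_k), w_k)_1$, so the induction closes provided $L(w_k) = 1$ in $\Gamma$ for every $k$. Writing $L_j := L \circ \sigma^j$ and computing $L_j$ on generators iteratively, one verifies that the resulting substitutions settle into a period-$3$ cycle from $j = 2$ onward (a direct check gives $L_5 = L_2$). It then suffices to confirm $L_j((ab)^2) = (L_j(a)\,L_j(b))^2 = 1$ in $\Gamma$ for each $j \in \{0,1,2,3,4\}$; four of the five cases collapse to $1$ using only $a^2 = b^2 = c^2 = d^2 = 1$, and the remaining case $j = 1$ yields $(da)^4$, equal to $1$ in $\Gamma$ because a one-line level-$1$ computation gives $(ad)^2 = (b,b)_1$, hence $(ad)^4 = 1$.

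The length bound is immediate from the structure of $\sigma$: it sends $a$ to a length-$3$ word and each of $b, c, d$ to a single letter, and exactly doubles the count of $a$'s per step. Starting from $|w_0| = 4$ and $\#a(w_0) = 2$, induction gives $\#a(w_k) = 2^{k+1}$ and $|w_k| = 2^{k+2} < 5 \cdot 2^k$, so any $C > 4$ (e.g.\ $C = 5$) works.

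The main obstacle will be the bookkeeping needed to pin down the period-$3$ cycle of the $L_j$ and to check that each $L_j((ab)^2)$ collapses to $1$ in $\Gamma$. Only the exceptional case $j = 1$ appeals to a nontrivial relation beyond the order-$2$ relations on the generators, namely $(ad)^4 = 1$, and this in turn reduces to the one-line identity $(ad)^2 = (b,b)_1$.
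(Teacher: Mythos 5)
Your proof is correct, and it takes a genuinely different route from the paper's. The paper argues by induction starting from the explicit commutator identity $(ab)^2d^{-1}(ab)^{-2}d=(abad)^2=(1,(ab)^2)_1$, and for the inductive step asserts that ``conjugating'' $g_k=(1,\ldots,1,(ab)^2)_k$ by one of $b$, $c$, $d$ produces $g_{k+1}$ (this must be read as forming the analogous commutator, which at worst doubles the length plus a constant; literal conjugation by $b,c,d$ fixes the vertex $1^k$ and so cannot raise the level). You instead iterate the substitution $\sigma\colon a\mapsto aca$, $b\mapsto d$, $c\mapsto b$, $d\mapsto c$ (Lysenok's substitution), using the identity $\sigma(u)=(L(u),u)_1$ — which is right, since $aca=(d,a)_1$, $d=(1,b)_1$, $b=(a,c)_1$, $c=(a,d)_1$ all have level $1$ and products of level-$1$ elements multiply coordinatewise — and you reduce the entire induction to the single assertion that the left sections $L_k((ab)^2)$ are trivial in $\Gamma$. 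Your finite verification of that assertion checks out: $L_5=L_2$ as maps to $\Gamma$, the recursion $L_{j+1}=L_j\circ\sigma$ propagates the period, and the five cases $j=0,\dots,4$ collapse as you say, with only $j=1$ needing $(da)^4=1$, which indeed follows from $(ad)^2=(b,b)_1$. What your version buys is a completely explicit word $w_k$ with $|w_k|=2^{k+2}$ (so $C=5$ suffices) and a transparent level-raising mechanism at the cost of the periodicity bookkeeping; the paper's argument is shorter but leaves both the inductive step and the constant implicit.
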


\begin{proof} 
We will prove this by induction on $k$.
For the base case, observe that
 $$(ab)_0^2 d^{-1} (ab)_0^{-2} d = (abad)_0^2 = (c,a)_1 (1, b)_1 (c, a)_1 (1, b)_1 = (1, (ab)^2)_1.$$
For the inductive step, let $g_k = (1, 1, \ldots,1,  (ab)^2)_k$.
Then conjugating $g_k$ by one of $b$, $c$ or $d$ yields $(1, 1, \ldots, 1, (ab)^2)_{k+1}$.
\end{proof}

\begin{theorem} We have $F_\Gamma(n) \succeq 2^n$. \label{griglowerbound}
\end{theorem}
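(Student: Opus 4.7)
The plan is to combine Lemma \ref{elementexists} with a lower bound argument: the element $g_k := (1, \ldots, 1, (ab)^2)_k$ has word length $\leq C \cdot 2^k$, and I will show $k_\Gamma(g_k) \geq 2^{c \cdot 2^k}$ for some $c > 0$. Setting $n := C \cdot 2^k$ then gives $F_\Gamma(n) \geq 2^{cn/C}$, which is equivalent to $2^n$ under $\simeq$.

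For the lower bound on $k_\Gamma(g_k)$, first check by direct computation with the recursive definitions of $a$, $b$, $c$, $d$ that $(ab)^2$ fixes the first level of $T$ but acts nontrivially on level two; hence $g_k$ fixes level $k+1$ of $T$ but acts nontrivially on level $k+2$. In particular $\psi_j(g_k) \neq 1$ requires $j \geq k+2$. Next, invoke the congruence subgroup property of $\Gamma$ (every finite-index subgroup of $\Gamma$ contains a principal congruence subgroup): this forces every finite quotient $\phi : \Gamma \to Q$ with $\phi(g_k) \neq 1$ to factor through $\psi_j$ for some $j \geq k+2$. By Lemma \ref{cong}, $|\Gamma_j| \geq |\Gamma_{k+2}| = 2^{5 \cdot 2^{k-1} + 2}$, already of the form $2^{c \cdot 2^k}$.

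What remains is to pass from the lower bound on $|\Gamma_j|$ to one on $|Q|$ itself, since a priori $Q$ could be a small proper quotient of $\Gamma_j$. The strategy here is to exploit the branch structure of $\Gamma$. Because $\Gamma$ acts transitively on level $k$, conjugating $g_k$ by appropriate elements yields $2^k$ commuting elements $g_k^{(1)}, \ldots, g_k^{(2^k)}$, each supported on a distinct level-$k$ subtree. These conjugates remain nontrivial in $Q$ (as conjugates of $\phi(g_k) \neq 1$), commute pairwise, and have finite order dividing the order of $(ab)^2$ in $\Gamma$. Using the rigid-stabilizer decomposition of $\Gamma$ at level $k$, one argues that the $\phi(g_k^{(i)})$ must generate an abelian subgroup of $Q$ of size at least $2^{c' \cdot 2^k}$.

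The main obstacle is precisely this last step: a priori $\phi$ could identify many of the $g_k^{(i)}$, shrinking the abelian subgroup they generate in $Q$. Ruling this out requires using the direct-product structure of the rigid level-$k$ stabilizer together with CSP to force $\phi$ to preserve enough independence among the conjugates. A cleaner alternative would be a direct normal-subgroup analysis of $\Gamma_{k+2}$, showing that any normal $M \triangleleft \Gamma_{k+2}$ with $\psi_{k+2}(g_k) \notin M$ has index at least $2^{c' \cdot 2^k}$; but this requires detailed information about the normal subgroup lattice of the level-$j$ quotients.
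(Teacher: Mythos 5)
Your setup is sound and matches the paper's: you take the element $g_k=(1,\ldots,1,(ab)^2)_k$ from Lemma \ref{elementexists}, note its word length is $O(2^k)$, and aim to show $k_\Gamma(g_k)\geq 2^{c\cdot 2^k}$, which does give $F_\Gamma(n)\succeq 2^n$ after the change of variables $n=C2^k$. But the core of the argument is missing, and you say so yourself. The congruence subgroup property in the form you invoke it gives the containment $\ker\psi_j\subseteq\ker\phi$, which is the \emph{wrong direction} for bounding $|Q|=[\Gamma:\ker\phi]$ from below: it only tells you $Q$ is a quotient of $\Gamma_j$, and $|\Gamma_j|$ is an upper bound for such a quotient, not a lower bound. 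Your attempted repairs (pairwise commuting conjugates supported on distinct subtrees, rigid level-$k$ stabilizers, or a normal-subgroup analysis of $\Gamma_{k+2}$) are left as sketches with the decisive step --- preventing $\phi$ from collapsing the conjugates --- explicitly unproved. As written, this is not a proof.

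The missing ingredient is a quantitative rigidity statement about normal subgroups of $\Gamma$ itself, not of its finite quotients: from the proof of Theorem 42 on page 239 of de la Harpe, any normal subgroup $N\trianglelefteq\Gamma$ containing an element of level $m$ (i.e.\ an element fixing levels $1,\ldots,m$ but not contained in deeper congruence kernels) must contain the entire principal congruence subgroup $\ker\psi_{m+6}$. This is strictly stronger than the CSP as you state it, and it closes the gap in one line: if $N=\ker\phi$ and $g_k\notin N$, then $N$ cannot contain any element of level $m$ with $m+6\leq k+1$, since $g_k\in\ker\psi_j$ for all $j\leq k+1$; hence every element of $N$ has large level, i.e.\ $N\subseteq\ker\psi_{k-5}$ (up to the exact constant), and therefore $|Q|\geq|\Gamma_{k-5}|=2^{5\cdot 2^{k-8}+2}$ by Lemma \ref{cong}. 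That containment $N\subseteq\ker\psi_{k-5}$ is exactly the lower bound on $|Q|$ you were trying to manufacture with the branch-structure argument. Without citing or proving some version of this rigidity fact, the proof does not go through.
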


\begin{proof}
By Lemma \ref{elementexists} there exists a nontrivial element $g \in \Gamma$ of word length no greater than $C 2^n$ such that any $k < n$ has $\psi_k(g) = 1$.
Let $N$ be the normal subgroup of $\Gamma$ of smallest index such that $g\notin N$.
If any element in $N$ has level $k$, then by the proof of Theorem 42 on page 239 in de la Harpe \cite{harpe-2000}, $N$ must contain $\ker \psi_{k+6}$.
Hence, as $g \notin N$, the normal subgroup $N$ must act trivially on the first $n-6$ levels of the rooted binary tree.
Thus, $N$ is contained in $\ker \psi_{n-6}$ and so has index greater than or equal to $2^{5\cdot 2^{n-9} + 2}$ when $n \geq 9$, by Lemma \ref{cong}, giving the desired lower bound.
\end{proof} 




\section{Proof of Theorem \ref{bigtheorem}}

Theorem \ref{bigtheorem} follows from Lemma \ref{secondlemma}, below, and Theorem \ref{nilpotentgrouptheorem}.

\begin{lemma} \label{secondlemma}
If $G$ is a finitely generated group that is not nilpotent, then
$n \preceq F_G^{nil}(n) $.
\end{lemma}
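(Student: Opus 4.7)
The plan is, for each $c \geq 1$, to exhibit an element $g_c \in G$ of word length $O(2^c)$ whose smallest finite nilpotent quotient has cardinality at least $2^{c+1}$; combined with the monotonicity of $F_G^{nil}$, this gives $n \preceq F_G^{nil}(n)$. Before constructing the $g_c$'s, I dispose of a degenerate case. If $G$ is not residually nilpotent, then some nontrivial $g$ lies in $\bigcap_c \gamma_c(G)$, and hence dies in every finite nilpotent quotient, so $k_G^{nil}(g) = \infty$ and $F_G^{nil}$ is eventually infinite, trivially dominating $n$. So assume from now on that $G$ is residually nilpotent.

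Under this assumption I claim that $\gamma_c(G)/\gamma_{c+1}(G) \neq 1$ for every $c \geq 1$. Indeed, if $\gamma_c(G) = \gamma_{c+1}(G)$ for some $c$, then inductively $\gamma_{c+k}(G) = [\gamma_{c+k-1}(G),G] = \gamma_{c+k-1}(G) = \cdots = \gamma_c(G)$, so $\gamma_c(G) \subseteq \bigcap_k \gamma_k(G) = 1$, forcing $G$ to be nilpotent of class less than $c$ and contradicting the hypothesis. Consequently, some left-normed simple commutator $g_c = [s_{i_1},[s_{i_2},[\cdots,s_{i_c}]\cdots]]$ in the generators $S$ is nontrivial modulo $\gamma_{c+1}(G)$, since $\gamma_c(G)/\gamma_{c+1}(G)$ is generated by the images of such commutators. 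A routine induction based on $\|[x,y]\|_S \leq 2\|x\|_S + 2\|y\|_S$ then yields $\|g_c\|_S \leq C \cdot 2^c$ for some constant $C$ independent of $c$.

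Finally, let $Q$ be any finite nilpotent quotient of $G$ in which $g_c$ survives. Since $g_c \in \gamma_c(G)$, its image in $Q$ lies in $\gamma_c(Q)$, which is therefore nontrivial, so $Q$ has nilpotency class at least $c$. Writing $Q$ as the product of its Sylow subgroups, some Sylow $p$-subgroup $Q_p$ also has class at least $c$; since a $p$-group of class $c$ admits a strictly descending lower central series $Q_p > \gamma_2(Q_p) > \cdots > \gamma_{c+1}(Q_p) = 1$ of $c+1$ strict steps, each with a $p$-group quotient of order at least $p$, we obtain $|Q| \geq |Q_p| \geq p^{c+1} \geq 2^{c+1}$. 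Setting $n = \lceil C \cdot 2^c \rceil$ then gives $F_G^{nil}(n) \geq k_G^{nil}(g_c) \geq 2^{c+1}$, establishing $F_G^{nil}(n) \succeq n$. The most delicate step is the second one: the interplay between the ``not nilpotent'' hypothesis and residual nilpotence is what forces every $\gamma_c/\gamma_{c+1}$ to remain nontrivial, and this is precisely what makes the construction of the $g_c$'s possible at all; the commutator-length bound and the order estimate for nilpotent $p$-groups are otherwise entirely standard.
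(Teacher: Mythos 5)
Your proof is correct and follows essentially the same route as the paper's: produce a weight-$c$ commutator in the generators, of word length $O(2^c)$, lying in $\gamma_c(G)$, and observe that any finite nilpotent quotient in which it survives has nilpotency class at least $c$ and hence order at least $2^c$. The only differences are cosmetic --- the paper extracts a nontrivial basic commutator directly from the fact that $\gamma_c(G)$ is normally generated by such commutators (so it needs neither your separate non-residually-nilpotent case nor the passage to $\gamma_c(G)/\gamma_{c+1}(G)$), and your chain $Q_p > \gamma_2(Q_p) > \cdots > \gamma_{c+1}(Q_p)=1$ has $c$ strict steps rather than $c+1$, a harmless off-by-one that does not affect the asymptotics.
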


\begin{proof}
Let $S$ be a finite set of generators for $G$.
It suffices to show that there exists a $C >0$ such that for any $n$, there exists $g$ with $\| g \|_S \leq C 2^n$ and $k_G^{nil}(g) \geq 2^n$.
Fix $n > 0$, then since $G$ is not nilpotent, $\Gamma_n(G) \neq 1$.
Recall that $\Gamma_n(G)$ is normally generated by elements of the form
$
[a_1, \ldots, a_n]
$
where $a_i \in S$ or $a_i^{-1} \in S$ for every $i$.
Since $\Gamma_n(G) \neq 1$, there exists some element $[a_1, \ldots, a_n]$ as above that is nontrivial.
Hence, there exists some $g$ with $\| g \|_S \leq C 2^n$ with $g \in \Gamma_n(G)$.
Any finite nilpotent quotient $Q$ where $g \neq 1$ must be nilpotent of class $n+1$ or more giving $|Q| \geq 2^n$.
Thus $k_G^{nil}(g) \geq 2^n$, as desired.
\end{proof}


\begin{thebibliography}{9}



\bibitem{Grigo-80}
R. I. Grigorchuk,  \emph{Burnside's problem on periodic groups}, Functional Anal. Appl. {\bf 14} (1980), 41-43.

\bibitem{harpe-2000}  P. de La Harpe, \emph{Topics in Geometric Group Theory}, Chicago Lectures in Mathematics, Chicago 2000.

\bibitem{edmonton:hall} P. Hall,
\emph{The Edmonton notes on nilpotent groups.}
Queen Mary College Mathematics Notes. Mathematics Department, Queen Mary College, London 1969.

\bibitem{LMR} A. Lubotzky, Sh. Mozes, M. S. Raghunathan, \emph{The word and Riemannian metrics on lattices of semisimple groups}, Publ. Math. IHES, No. 91 (2000), 5-53.

\bibitem{lubsegal-2003} A. Lubotzky and D. Segal, \emph{Subgroup Growth}, Progress in Mathematics, 212. Birkh\"auser Verlag, Basel, 2003.

\bibitem{riley-2004} T. R. Riley, \emph{Navigating in the Cayley graphs of $\SL_N(\mathbb{Z})$ and $\SL_N(\mathbb{F}_p)$}, Geom. Dedicata 113 (2005).

\bibitem{Segal}  D. Segal, \emph{Polycyclic groups}, Cambridge Tracts in Mathematics 82 (Cambridge University Press, 1983).

\bibitem{steinshak} E. M. Stein and R. Shakarchi, \emph{Complex Analysis}, Princeton Lectures in Analysis, II. Princeton University Press, Princeton, NJ, 2003.




\bibitem{platonov} V. P. Platonov and A. S. Rapinchuk, \emph{Algebraic groups and number theory}. Translated from the 1991 Russian original by Rachel Rowen. Pure and Applied Mathematics, 139. Academic Press, Inc., Boston, MA, 1994.

\bibitem{serre} H. Bass, M. Lazard, J.-P. Serre, \emph{Sous-groupes d'indice fini dans $\SL(n, \mathbb{Z})$}, Bull. Amer. Math. Soc. 70 (1964) 385-392.

\end{thebibliography}
\end{document}